\newtheorem{dummy}{anything}[section]
\newtheorem{theorem}[dummy]{Theorem}
\newtheorem{lemma}[dummy]{Lemma}
\newtheorem{question}[dummy]{Question}
\newtheorem{proposition}[dummy]{Proposition}
\newtheorem{corollary}[dummy]{Corollary}
\newtheorem{conjecture}[dummy]{Conjecture}
\theoremstyle{definition}
\newtheorem{remark}[dummy]{Remark}
\newtheorem*{acknowledgements}{Acknowledgements}
\begin{document}

\title{Smoothly non-isotopic Lagrangian disk fillings of Legendrian knots}

\author{Youlin Li and Motoo Tange}

\address{School of Mathematical Sciences, Shanghai Jiao Tong University, Shanghai 200240, China}
\email{liyoulin@sjtu.edu.cn}

\address{Institute of Mathematics, University of Tsukuba, 1-1-1 Tennodai, Tsukuba, Ibaraki 305-8571, Japan
}
\email{tange@math.tsukuba.ac.jp}

\maketitle
\begin{abstract}
In this paper, we construct the first families of distinct Lagrangian ribbon disks in the standard symplectic 4-ball which have the same boundary Legendrian knots, and are not smoothly isotopic or have non-homeomorphic exteriors.
\end{abstract}

\section{Introduction}

Given a ribbon knot $K$ in $S^3$, it may bound distinct ribbon disks. For instance, in \cite{A}, Akbulut constructed a pair of ribbon disks which have the same boundary knots and diffeomorphic exteriors, and are not smoothly isotopic relative to the boundary.

The standard contact 3-sphere $(S^{3}, \xi_{st})$ has a canonical symplectic filling $(B^4,\omega_{st})$, the standard symplectic 4-ball. Given a Legendrian knot $L$ in $(S^{3}, \xi_{st})$, if there is a Lagrangian surface $S$ in $(B^4,\omega_{st})$ whose boundary is $L$, then we say that $S$ is a {\it Lagrangian filling} of $L$ (\cite{CGHS}, \cite{CNS}). There are some restrictions for a Legendrian knot admitting a Lagrangian filling. For instance, the sum of the Thurston-Bennequin invariant of a Lagrangian fillable Legendrian knot and the Euler characteristic of its bounding orientable Lagrangian surface is zero.  See for example \cite{Ch}, \cite{D} and \cite{E0}. The Legendrian unknot with Thurston-Bennequin invariant $-1$ bounds a Lagrangian disk. The Lagrangian disk fillable Legendrian nontrivial knot in $(S^{3}, \xi_{st})$ with minimal crossing number is a Legendrian $\overline{9_{46}}$, the mirror of the knot $9_{46}$. This example appeared in \cite{Ch2}, \cite{HS} and some other papers. There are more Lagrangian disk fillable Legendrian knots listed in \cite[Table 1]{CNS}.

If a Legendrian knot $L$ is Lagrangian fillable, then one can naturally ask how many distinct Lagrangian surfaces filling $L$. 
Using symplectic invariants, people have found many examples of Legendrian knots each of which bounds distinct exact Lagrangian surfaces up to Lagrangian isotopy or Hamiltonian isotopy (\cite{EHK}, \cite{STWZ}, \cite{E}, etc.). In particular, in \cite[Section 3]{E}, Ekholm proved that there are two Lagrangian disks in $(B^4,\omega_{st})$ which are filled by a Legendrian $\overline{9_{46}}$ and are not Hamiltonian isotopic to each other. Ekholm used this distinction to construct a non-loose Legendrian sphere which turns out to be wrong \cite{E1}. However, the erratum in \cite{E1} does not affect the distinction of the two Lagrangian disks. On the other hand, Auroux exhibited a Legendrian knot in the boundary of a Stein domain other than $(B^4,\omega_{st})$ which fills two different Lagrangian disks \cite[Corollary 3.4]{Au}. Those two disks are distinguished by the first homology groups of their exteriors.

In this paper, we construct the first families of distinct Lagrangian disks in $(B^4,\omega_{st})$ which fill the same Legendrian knots in $(S^{3}, \xi_{st})$, and are not smoothly isotopic or have non-homeomorphic exteriors.

\begin{theorem}\label{thm1}
There exist Legendrian knots in $(S^{3}, \xi_{st})$ filling two Lagrangian disks which are not smoothly isotopic relative to the boundary, and have diffeomorphic exteriors.
\end{theorem}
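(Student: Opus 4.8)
The plan is to realize an Akbulut-type pair of smoothly knotted ribbon disks as honestly Lagrangian disks sharing a single Legendrian boundary, keeping the symplectic and smooth parts of the argument separate: symplectic topology will only be used to \emph{build} the two disks, while the diffeomorphism of their exteriors and the failure of a rel-boundary isotopy are both smooth-topological statements. This division is essentially forced on us, since smooth non-isotopy rel boundary is strictly stronger than the Hamiltonian or Lagrangian non-isotopy detected by the holomorphic-curve invariants of \cite{EHK}, \cite{E}, \cite{Au}, so those invariants cannot by themselves supply the distinction we want.

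First I would build the two fillings by the decomposable-cobordism machinery of Ekholm--Honda--K\'alm\'an \cite{EHK}: a Lagrangian disk filling of a Legendrian knot $L$ is prescribed by a sequence of Legendrian isotopies, births (minima), and pinch (saddle) moves reducing $L$ to the empty Legendrian, with the number of births exceeding the number of saddles by one so that $\chi=1$. Fix a Legendrian representative $L$ of a ribbon knot $K$ with $tb(L)=-1$ (the identity $tb(L)+\chi(D)=0$ forcing this value for a disk), and choose two move sequences $\sigma_1,\sigma_2$ whose saddles realize, as Legendrian fusion bands, the two band systems of Akbulut's ribbon disks. Since a pinch move is exactly the Legendrian avatar of a ribbon band, the underlying smooth surfaces of the resulting Lagrangian disks $D_1,D_2$ will be Akbulut's pair, or a variant close enough to run his arguments. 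The delicate point to verify here is that both sequences can be arranged to issue from the \emph{same} $L$, so that $D_1$ and $D_2$ genuinely co-fill one Legendrian knot rather than merely one smooth knot.

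Next I would analyze the exteriors. A decomposable Lagrangian disk is smoothly ribbon, so each exterior $E_i=B^4\setminus\nu(D_i)$ admits a handle decomposition with a single $0$-handle together with $1$- and $2$-handles read directly off the band data of $\sigma_i$, and $\partial E_i=S^3_0(K)$ in both cases. Converting each $\sigma_i$ into a Kirby diagram of $E_i$, I would produce an explicit sequence of handle slides and $1$/$2$-handle cancellations carrying the diagram of $E_1$ onto that of $E_2$, which yields the required diffeomorphism $E_1\cong E_2$; this step transports Akbulut's Kirby-calculus computation into the present setting.

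The hard part will be the smooth non-isotopy. Because the exteriors are diffeomorphic, no invariant of $E_i$ alone can separate the disks: the obstruction has to see the embedding $D_i\hookrightarrow B^4$, equivalently the peripheral data carried by $\partial E_i=S^3_0(K)$, namely the disk meridian $\mu_{D_i}$ together with the identification of the knot-exterior part of $\partial E_i$. An isotopy rel boundary would promote the abstract diffeomorphism $E_1\cong E_2$ to one restricting to the identity on $S^3\setminus\nu(K)$ and sending $\mu_{D_1}$ to $\mu_{D_2}$; I would rule this out by following Akbulut \cite{A} and tracking how any diffeomorphism $E_1\to E_2$ acts on $\pi_1(E_i)$ with its distinguished meridian and on the boundary $S^3_0(K)$, showing the peripheral structures cannot be matched. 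Thus the genuinely new obstacle is not this last step but the earlier Lagrangian realization: forcing two prescribed ribbon-band systems to be simultaneously the saddle data of pinch-move fillings of one and the same Legendrian knot, while retaining enough of Akbulut's smooth structure to keep both the exterior computation and the peripheral obstruction intact.
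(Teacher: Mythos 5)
Your architecture (decomposable pinch-move fillings, a Kirby-calculus diffeomorphism of exteriors, then a rel-boundary obstruction) matches the paper's, but the proposal has a genuine gap at the construction step, which is where the real content lies. You never produce a specific Legendrian knot, and the candidate implicit in your plan --- realizing Akbulut's own pair $d_1,d_2$ with boundary $K_{Ak}$ as Lagrangian disks --- provably cannot work: as the paper notes in its final remark, Rudolph's Kauffman-polynomial bound gives $tb(L)\le \min\deg_a(F_{K_{Ak}})-1=-2$ for every Legendrian representative $L$ of $K_{Ak}$ (and $\le -9$ for its mirror), while a Lagrangian disk filling forces $tb=-1$. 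So ``a variant close enough to run his arguments'' is not a detail to be checked but the entire theorem. The paper's substitute is the Legendrian $\overline{9_{46}}$ of Figure~\ref{946}: its two visible pinch sites give two decomposable Lagrangian disks $D_1,D_2$ from the same Legendrian diagram (so your ``delicate point'' about co-filling one Legendrian knot is resolved essentially for free), and the carved-out exteriors $W_1\cong W_2$ turn out to be diffeomorphic to the exterior of a slice disk in the Akbulut cork, which is what lets an Akbulut-style argument run.

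Your obstruction step also diverges from the paper's and is underspecified. The paper does not distinguish the disks by peripheral $\pi_1$ data: assuming $D_1\simeq D_2$, it extends the identity of $\partial W_1=\partial W_2=S^3_0(\overline{9_{46}})$ to a diffeomorphism, attaches a $(-1)$-framed 2-handle $h$ along a fixed boundary curve, and shows $W_2\cup h$ admits a Stein structure while $W_1\cup h$ contains a smoothly embedded sphere of square $-1$, contradicting the adjunction inequality of Lisca--Mati\'c (Theorem~\ref{AMtheorem}). This is a gauge-theoretic distinction, and it is also the mechanism of Akbulut's original proof in \cite{A}, which you cite as the model for a $\pi_1$ argument. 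A peripheral-$\pi_1$ obstruction does happen to exist for this particular example --- Section~5 of the paper exhibits a curve $c\subset\partial W_1$ that is null-homotopic in $W_1$ but normally generates a subgroup with non-abelian quotient issues in $W_2$, so the identity on the boundary does not extend even to a homotopy equivalence --- but the distinguishing element is not the disk meridian (which maps to a generator of $\pi_1(W_i)\cong\langle x,y\mid y^{-1}xy=x^2\rangle$ in both cases), and you would have to locate such a curve explicitly. For a general cork-type pair no homotopy-theoretic obstruction is available, which is why the paper's proof of Theorem~\ref{thm1} is written with the Stein/adjunction argument rather than with fundamental groups.
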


Our first example is a Legendrian knot $\overline{9_{46}}$ shown in Figure \ref{946}. It bounds two Lagrangian disks which are not smoothly isotopic relative to the boundary. Their exteriors are diffeomorphic. These two Lagrangian disks are exactly the same as those in \cite{E}. So Theorem \ref{thm1} implies Ekholm's result. Our second example is a Legendrian knot $\overline{9_{46}}\sharp\overline{9_{46}}$  shown in Figure \ref{fig: LL}. It bounds four Lagrangian disks which are pairwisely not smoothly isotopic relative to the boundary. In particular, some pairs of these four Lagrangian disks have non-homeomorphic exteriors in $B^4$.

\begin{theorem}\label{thm2}
There exists a Legendrian knot in $(S^{3}, \xi_{st})$ filling two Lagrangian disks whose exteriors are not homeomorphic.
\end{theorem}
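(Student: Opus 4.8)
The plan is to realize the theorem through the second example, the Legendrian connected sum $\overline{9_{46}}\sharp\overline{9_{46}}$, and to distinguish exteriors by counting representations of their fundamental groups into a fixed finite group. First I would describe the four Lagrangian disk fillings $D_{ij}$, $i,j\in\{1,2\}$, as boundary connected sums $D_i\natural D_j$ of Ekholm's two fillings $D_1,D_2$ of a single $\overline{9_{46}}$, one summand contributing $D_i$ and the other $D_j$. Writing $X_{ij}=B^4\setminus\nu(D_{ij})$ for the exterior and $G_k=\pi_1\bigl(B^4\setminus\nu(D_k)\bigr)$ for the groups of the individual disk exteriors, the boundary connected sum along the meridian gives a van Kampen decomposition
\[
\pi_1(X_{ij})\;\cong\;G_i*_{\mathbb{Z}}G_j,
\]
the amalgamation identifying the meridian generators $\mu_i\in G_i$ and $\mu_j\in G_j$. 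Since a homeomorphism of exteriors induces an isomorphism of these groups, any homomorphism--counting invariant of $G_i*_{\mathbb{Z}}G_j$ is a homeomorphism invariant of $X_{ij}$.

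For a finite group $G$ set $f_k(g)=\#\{\rho\colon G_k\to G \mid \rho(\mu_k)=g\}$, a class function on $G$. A homomorphism out of an amalgamated product is a compatible pair of homomorphisms of the factors agreeing on the amalgamated subgroup, so
\[
N_{ij}:=\#\mathrm{Hom}\bigl(\pi_1(X_{ij}),G\bigr)=\sum_{g\in G}f_i(g)\,f_j(g).
\]
To prove the theorem it then suffices to exhibit a single finite group $G$ for which $f_1$ and $f_2$ are not proportional as functions on $G$: by the Cauchy--Schwarz inequality $N_{12}^2<N_{11}N_{22}$, so the three numbers $N_{11},N_{12},N_{22}$ cannot all coincide, and consequently at least two of the four exteriors $X_{ij}$ are not homeomorphic.

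The remaining work is the explicit computation of $f_1$ and $f_2$. I would read off Wirtinger-type ribbon presentations of $G_1$ and $G_2$ from the movies of the two Lagrangian disks on $\overline{9_{46}}$, keeping careful track of which generator represents the meridian $\mu_k$, and then choose a small finite target group $G$ (a symmetric group, or a metacyclic or $\mathrm{SL}(2,\mathbb{F}_p)$ quotient adapted to $9_{46}$) for which the meridian-constrained counts can be enumerated directly. The main obstacle is precisely that, by Theorem \ref{thm1}, $G_1$ and $G_2$ are abstractly isomorphic and the total counts $\sum_g f_k(g)=\#\mathrm{Hom}(G_k,G)$ agree, so no invariant of the factor groups alone can separate the four exteriors; the entire distinction must be extracted from the different \emph{positions} of the meridians, i.e.\ from the fact that no isomorphism $G_1\cong G_2$ carries the conjugacy class of $\mu_1$ to that of $\mu_2$. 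Establishing the resulting non-proportionality $f_1\not\equiv f_2$ for an explicit $G$, which is where the smooth non-isotopy of $D_1$ and $D_2$ rel boundary manifests combinatorially, is the crux of the argument.
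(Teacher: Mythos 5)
Your framework is sound and genuinely different from the paper's: the van Kampen decomposition $\pi_1(X_{ij})\cong G_i*_{\mathbb{Z}}G_j$ along the meridian is correct, the count $N_{ij}=\sum_g f_i(g)f_j(g)$ for homomorphisms out of a pushout is correct, and the Cauchy--Schwarz observation that $N_{11}=N_{12}=N_{22}$ forces $f_1\propto f_2$ is a clean way to conclude without having to decide in advance which pair of exteriors differs. The paper instead works with explicit presentations of $\pi_1(W_{22})$ and $\pi_1(W_{12})$ read off from handle decompositions and distinguishes them by Fox calculus: the Alexander polynomials come out as $(2-t^{-1})^2$ and $(2-t^{-1})(2-t)$, which are not associates. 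Both routes exploit the same underlying phenomenon, namely that the abstract isomorphism $G_1\cong G_2$ does not respect the (conjugacy class of the) meridian.

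However, as written your proposal has a genuine gap: the entire burden of proof rests on exhibiting a finite group $G$ with $f_1\not\propto f_2$, and you explicitly defer this, calling it ``the crux.'' Nothing in the proposal establishes that such a $G$ exists; a priori, all finite-quotient counts could coincide even when the groups differ, and since $G_1\cong G_2$ the inequality must come entirely from the meridian's position, which is exactly the delicate part. Until you write down presentations of $(G_1,\mu_1)$ and $(G_2,\mu_2)$, pick a concrete $G$, and verify the non-proportionality, the theorem is not proved. The paper's Alexander polynomial computation actually tells you where to look if you want to finish your argument: the two-variable data $2-t^{-1}$ versus $2-t$ attached to the two fillings suggests counting homomorphisms to metabelian groups $\mathbb{Z}/n\ltimes\mathbb{Z}/p$ in which the meridian acts on $\mathbb{Z}/p$ by multiplication by $2$ versus by $2^{-1}$; carrying out that enumeration (or simply adopting the paper's Fox calculus step) is what is missing.
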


In \cite{CET}, Conway, Etnyre and Tosun proved that the contact $(+1)$-surgery along a Legendrian knot $L$ in $(S^{3}, \xi_{st})$ is strongly symplectically fillable if and only if $L$ bounds a Lagrangian disk $D$ in $(B^4,\omega_{st})$, and the symplectic filling of the contact $(+1)$-surgery along $L$ can be constructed by removing a neighborhood of $D$ from $B^4$. They asked the following question.

\begin{question}[\cite{CET}]\label{question1}
Let $L$ be a Legendrian knot in $(S^3, \xi_{st})$ with two (or more) distinct Lagrangian disk fillings in $(B^4,\omega_{st})$. Does contact $(+1)$-surgery on $L$ have more than one Stein (or symplectic) filling up to symplectomorphism?
\end{question}

By Theorem \ref{thm2} and the result in \cite{CET}, there is a Legendrian knot along which the contact $(+1)$-surgery has two non-homeomorphic, and hence non-symplectomorphic, Stein fillings. This gives an evidence for Question \ref{question1}.

\begin{corollary}
\label{cor}
Let $L$ be the Legendrian knot $\overline{9_{46}}\sharp\overline{9_{46}}$ depicted in Figure \ref{fig: LL}. Then the contact $(+1)$-surgery on $L$ has two Stein fillings up to homeomorphism.
\end{corollary}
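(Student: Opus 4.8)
The plan is to obtain Corollary \ref{cor} as a formal consequence of Theorem \ref{thm2} together with the Conway--Etnyre--Tosun correspondence from \cite{CET} recalled just above. The substantive geometric work is entirely absorbed into Theorem \ref{thm2}; the role of the corollary is only to transport that statement from the Lagrangian filling picture into the surgery picture. Concretely, I would fix the Legendrian knot $L=\overline{9_{46}}\sharp\overline{9_{46}}$ of Figure \ref{fig: LL} and select, among the four Lagrangian disks it bounds in $(B^4,\omega_{st})$, a pair $D_1,D_2$ whose exteriors are not homeomorphic. The existence of such a pair is exactly the content of Theorem \ref{thm2} specialized to this knot (it is the ``in particular'' clause of the second example).

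First I would record that, since $L$ bounds a Lagrangian disk, the result of \cite{CET} guarantees that the contact $(+1)$-surgery $(S^3_{+1}(L),\xi_L)$ is strongly symplectically fillable, and that a filling is produced by deleting a standard neighborhood of the disk from the ball. Applying this to each $D_i$ yields fillings $W_i = B^4 \setminus \nu(D_i)$, i.e.\ the exteriors $E(D_i)$ of the two disks. Because the neighborhood removed is a Weinstein neighborhood of an exact Lagrangian disk inside the Weinstein ball, each $W_i$ inherits a Weinstein, hence Stein, structure; I would note this explicitly so that the conclusion is about Stein fillings and not merely symplectic ones, matching the statement of the corollary.

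Finally I would conclude by comparing the two fillings. Theorem \ref{thm2} asserts that $E(D_1)$ and $E(D_2)$ are not homeomorphic as (smooth, hence topological) $4$--manifolds; a fortiori they are not homeomorphic as Stein fillings of $(S^3_{+1}(L),\xi_L)$. Therefore the contact $(+1)$-surgery on $L$ admits at least two Stein fillings up to homeomorphism, which is the claim.

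The only point requiring genuine care, rather than invocation, is the identification of the two ``fillings from \cite{CET}'' with the two ``exteriors from Theorem \ref{thm2}'': one must check that the filling $B^4\setminus\nu(D_i)$ produced by the surgery construction is the same manifold (after smoothing corners) as the exterior $E(D_i)$ whose homeomorphism type Theorem \ref{thm2} distinguishes. This is immediate once the neighborhoods are taken to be tubular neighborhoods of the disks, so I expect no real obstacle here; all of the difficulty in the corollary has already been discharged by Theorem \ref{thm2}.
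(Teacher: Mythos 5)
Your proposal is correct and follows essentially the same route as the paper, which derives the corollary directly from Theorem \ref{thm2} combined with the Conway--Etnyre--Tosun correspondence identifying the fillings of the contact $(+1)$-surgery with the disk exteriors. The only cosmetic difference is that the paper would justify Stein-ness of the fillings by the decomposability clause of the quoted CET theorem (the disks are decomposable, being built from pinch moves) rather than by your Weinstein-neighborhood argument, but both are standard and adequate.
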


\begin{figure}[htb]
\begin{overpic}
[width=0.25\textwidth]
{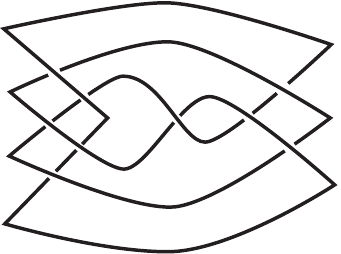}

\end{overpic}
\caption{A Legendrian knot of knot type $\overline{9_{46}}$.}
\label{946}
\end{figure}
\begin{figure}[htb]
\begin{overpic}
[width=0.5\textwidth]
{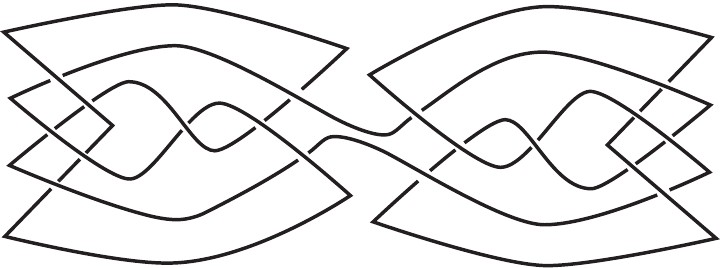}

\end{overpic}
\caption{A Legendrian knot of knot type $\overline{9_{46}}\sharp\overline{9_{46}}$. It is a Legendrian connected sum  of $L$ and itself.}
\label{fig: LL}
\end{figure}

Taking connected sums of multiple copies of Legendrian $\overline{9_{46}}$ in Figure~\ref{946}, we can find arbitrarily many distinct smooth isotopy classes of Lagrangian disks that fill a Legendrian knot.

\begin{theorem}
\label{Nfilling}
For any positive integer $N$ there exists a Legendrian knot $\mathcal{L}$ in $(S^3, \xi_{\text{st}})$ such that the number of smooth isotopy classes of Lagrangian disks that fill $\mathcal{L}$ in $(B^4,\omega_{\text{st}})$ relative to the boundary is greater than $N$.
\end{theorem}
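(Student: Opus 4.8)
The plan is to bootstrap from the two previously established results, Theorems \ref{thm1} and \ref{thm2}, via iterated Legendrian connected sums. The building block is the Legendrian $\overline{9_{46}}$ of Figure \ref{946}, which bounds (at least) two Lagrangian disks that are not smoothly isotopic relative to the boundary. The idea is that taking the Legendrian connected sum of $N$ copies of this knot produces a Legendrian knot $\mathcal{L}$, and that Lagrangian fillings behave additively under the boundary connected sum operation: choosing one of the two disks for each summand yields a Lagrangian disk filling of $\mathcal{L}$, so there are at least $2^N$ naturally arising such disks, and I want to show that more than $N$ of these lie in distinct smooth isotopy classes.

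First I would recall the construction of the Legendrian connected sum and verify that the boundary connected sum of Lagrangian disks is again a Lagrangian disk filling the connected-sum Legendrian knot; this is the standard operation of gluing fillings along a cylinder over a common Legendrian arc, so the Euler-characteristic constraint is respected and the result is a genuine disk. Thus from an $N$-tuple $(\epsilon_1,\dots,\epsilon_N)$ with each $\epsilon_i\in\{0,1\}$ recording which of the two component disks is used, I obtain a Lagrangian disk $D_{\epsilon_1\cdots\epsilon_N}$ filling $\mathcal{L}$.

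The heart of the argument is to distinguish these disks smoothly. The natural invariant is the diffeomorphism (or homeomorphism) type of the exterior, which is essentially the symplectic/Stein filling of the contact $(+1)$-surgery on $\mathcal{L}$ obtained by removing a neighborhood of the disk, per the result of \cite{CET} invoked above. Under connected sum of knots, the surgered manifold and the disk exterior decompose as a connected sum (or boundary connected sum) of the corresponding pieces for each summand, so invariants such as the fundamental group, first homology, or the intersection form of the exterior add over the summands. Since Theorem \ref{thm2} already provides two component disks whose exteriors are distinguished by such an invariant, I expect the invariant of $D_{\epsilon_1\cdots\epsilon_N}$ to depend on the multiset $\{\epsilon_1,\dots,\epsilon_N\}$ only through the count $k=\sum_i\epsilon_i$, and to be strictly monotone in $k$. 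This yields $N+1$ distinct values of the invariant as $k$ ranges over $0,1,\dots,N$, hence at least $N+1>N$ smooth isotopy classes.

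The main obstacle I anticipate is ensuring that the chosen invariant genuinely stacks additively and remains a strict isotopy invariant through the connected-sum operation, rather than being swamped by the trivial summands. Concretely, one must check that swapping one component disk for its partner changes the exterior's invariant in a detectable way even in the presence of the other $N-1$ summands — in other words, that the distinguishing invariant is not killed by the connected-sum gluing. I would handle this by selecting an invariant that is manifestly additive under (boundary) connected sum, such as the rank or a torsion summand of $H_1$ of the exterior, or the number of certain homology-sphere or lens-space summands in the surgery, and then citing the strict difference already produced in the proof of Theorem \ref{thm2} to guarantee strict monotonicity in $k$. Once additivity and strict monotonicity are in hand, the counting argument is immediate and completes the proof.
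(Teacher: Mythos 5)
There is a genuine gap, and it lies in the central premise of your distinguishing step. You propose to tell the disks $D_{\epsilon_1\cdots\epsilon_N}$ apart by an invariant of the exterior that is additive under boundary connected sum and strictly monotone in $k=\sum_i\epsilon_i$. But the paper's own results show this cannot work as stated: Proposition~\ref{D1D2notisotopic} proves that the two component disks $D_1$ and $D_2$ have \emph{diffeomorphic} exteriors $W_1\cong W_2$, and Proposition~\ref{Dijnotisotopic} proves $W_{11}\cong W_{22}$. So there is no ``strict difference already produced'' at the component level to propagate, and the exterior invariant is certainly not monotone in $k$ (the cases $k=0$ and $k=n$ give diffeomorphic exteriors, by the mirror symmetry of the diagrams). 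What Theorem~\ref{thm2} actually distinguishes is $W_{12}$ from $W_{22}$, i.e.\ two exteriors of disks bounded by the \emph{connected-sum} knot, via Alexander polynomials $(2-t^{-1})(2-t)$ versus $(2-t^{-1})^2$; and since the group-theoretic Alexander polynomial is only well defined up to units and $t\mapsto t^{-1}$, which interchanges the two factor types, this invariant can at best see the unordered pair $\{k,n-k\}$. A repaired version of your plan could therefore distinguish only on the order of $n/2+1$ of the $2^n$ disks (which, with $n=2N$, would still suffice for the statement), but even that requires computing $\pi_1$ and its Fox derivatives for the general $n$-fold exterior, which your proposal does not do.

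The paper's proof takes a genuinely different, relative route precisely because the absolute exterior invariants fail. Assuming $D_{i_1\cdots i_n}\simeq D_{j_1\cdots j_n}$ for distinct tuples, one extends the identity of the boundary $0$-surgeries to a diffeomorphism of the exteriors $W\to W'$, picks an index $r$ with $i_r\neq j_r$, and attaches a $(-1)$-framed $2$-handle $h$ along a curve in the common boundary at the $r$-th summand. The hypothetical diffeomorphism forces $W\cup h\cong W'\cup h$; but $W'\cup h$ admits a Stein structure (exhibited by an explicit Stein handlebody built from Lemma~\ref{Lem: Stein946}), while $W\cup h$ contains an embedded square $-1$ sphere and so cannot be Stein by the adjunction inequality of Theorem~\ref{AMtheorem}. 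This contradiction distinguishes all $2^n$ disks pairwise. If you want to pursue your approach, you would need to replace the monotone-in-$k$ claim by a careful computation showing the Alexander polynomial of the exterior determines $\{k,n-k\}$, and accept the weaker count; otherwise the argument as written does not go through.
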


Here we raise a conjecture.
\begin{conjecture}
Let $L$ be a Legendrian ribbon knot in $(S^3, \xi_{\text{st}})$.
Then $L$ bounds only finitely many Lagrangian isotopy types of Lagrangian ribbon disks in $B^4$.
\end{conjecture}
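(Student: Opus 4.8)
The plan is to combine the decomposable (\emph{ribbon}) structure of the fillings with a Floer-theoretic rigidity for the invariants they induce. First I would record the combinatorial skeleton. Since a Lagrangian disk filling forces $tb(L)+\chi(D)=0$, i.e.\ $tb(L)=-1$, and since $L$ is Legendrian ribbon, every Lagrangian ribbon disk $D$ filling $L$ is a decomposable exact cobordism from $\emptyset$ to $L$: it is given by a movie consisting of $b$ births of standard Legendrian unknots, $b-1$ merging (pinch) saddle moves, and Legendrian isotopies, arranged so that $\chi(D)=b-(b-1)=1$ and the saddles connect the birth components along a tree (no index-$2$ handles, no genus). After cancelling every birth--saddle pair removable by a Lagrangian handle cancellation, I would restrict to \emph{reduced} movies, and compare Lagrangian-isotopic disks only at the level of such reduced data.

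Second, I would pass to the Chekanov--Eliashberg DGA $\mathcal{A}(L)$ and exploit functoriality of exact Lagrangian cobordisms: each $D$ induces an augmentation $\epsilon_D\colon\mathcal{A}(L)\to\mathbb{F}$, well defined up to DGA-homotopy and invariant under Lagrangian isotopy of $D$ rel boundary. The key algebraic input is that disk fillings are strongly pinned down by their linearized invariants. By the Seidel-type isomorphism and the Ekholm--Etnyre--Sabloff duality long exact sequence, the linearized contact homology $LCH^{\epsilon_D}_*(L)$ is isomorphic to the (shifted) homology of $D$, hence has the rank and grading of a point. Together with the grading constraints coming from $tb(L)=-1$ and the ribbon condition, I would try to show that the disk-inducing augmentations form a $0$-dimensional, and therefore finite, subset of the augmentation variety of $L$.

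Third, I would need the assignment $D\mapsto\epsilon_D$ to be finite-to-one on Lagrangian isotopy classes. Here the aim is a Gromov-compactness and gluing argument, applied to moduli of holomorphic disks with boundary on a one-parameter family interpolating between two fillings, showing that two reduced ribbon disks which induce DGA-homotopic augmentations and represent the same smooth isotopy class are in fact Lagrangian isotopic. The finitely many smooth isotopy classes available to distribute the augmentations among would be controlled by the bounded number of $1$-handles in a reduced movie for a disk of fixed Euler characteristic filling a fixed knot type.

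\textbf{The main obstacle.} The decisive and genuinely open step is this last finite-to-one property: the augmentation is not a complete Lagrangian-isotopy invariant, so controlling its fibers demands a rigidity result --- essentially an $h$-principle or a surgery-theoretic uniqueness --- for exact Lagrangian disks in $(B^4,\omega_{st})$ with prescribed boundary and prescribed Floer data, which is not presently available. Equally delicate is preventing the number of births $b$ in a reduced movie from growing without bound while $L$ stays fixed: bounding $b$ intrinsically in terms of the complexity of $L$ (for instance the number of Reeb chords), so that only finitely many reduced movies and hence finitely many Lagrangian isotopy types can occur, is the combinatorial heart of the statement and the reason it is posed here as a conjecture rather than proved as a theorem.
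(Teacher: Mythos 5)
The statement you are trying to prove is stated in the paper as a \emph{conjecture}: the authors give no proof, and they explicitly remark that the finiteness question is open even for smooth ribbon disks. So there is no paper proof to match, and your proposal --- as you yourself concede in your final paragraph --- is a strategy outline with genuinely open steps, not a proof. It is worth naming the gaps precisely. First, your opening step already assumes an unproved result: you take every Lagrangian ribbon disk filling of $L$ to be decomposable, i.e.\ presented by a movie of births, pinch moves and Legendrian isotopies. Decomposability of arbitrary (exact) Lagrangian fillings is itself an open problem --- this is exactly why the class of decomposable fillings is singled out as a definition in \cite{CGHS} --- so restricting to ``reduced movies'' does not a priori cover all Lagrangian ribbon disks, and even granting it, there is no bound on the number of births $b$ or on the intervening Legendrian isotopies for a fixed $L$, so the set of reduced movies is infinite and the reduction buys no finiteness by itself.

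Second, the Floer-theoretic half of the plan cannot close the argument as stated. The Seidel-type isomorphism (cf.\ \cite{D}) does force $LCH^{\epsilon_D}_*(L)$ to have the homology of a point for any disk filling, but this constrains which augmentations arise, not how many fillings induce each one; over a finite coefficient field the set of augmentations is finite for trivial reasons, so the ``$0$-dimensional subset of the augmentation variety'' step is either automatic or content-free. The entire difficulty is your third step, the finite-to-one property of $D\mapsto\epsilon_D$ on Lagrangian isotopy classes, and here the paper's own results cut against optimism: Proposition \ref{D1D2notisotopic}, Proposition \ref{Dijnotisotopic} and Theorem \ref{Nfilling} produce Legendrian knots bounding arbitrarily many Lagrangian disks that are pairwise non-isotopic even smoothly (some with non-homeomorphic exteriors, by Lemma \ref{Lem: nonisomorphic}), detected by 4-manifold techniques (Theorem \ref{AMtheorem}) rather than by augmentations. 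In particular your suggestion that the smooth isotopy classes available are ``controlled by the bounded number of $1$-handles in a reduced movie'' is not tenable without exactly the kind of complexity bound you flag as missing; no such bound is known, and proving one would essentially resolve the conjecture. Your assessment that the statement deserves to remain a conjecture is correct, and your sketch is a reasonable map of where the obstructions lie, but none of its three stages is currently a theorem.
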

Notice that it is not known whether this conjecture is true or not even in the case of smooth ribbon disks.


\medskip
\begin{acknowledgements}
The authors would like to thank John Etnyre and Honghao Gao for useful conversations. We are also grateful to the referee(s) for valuable suggestions. Part of this work was carried out while the first author was visiting University of Tsukuba and he would like to thank for their hospitality. The first author was partially supported by Grant No. 11871332 of the National Natural Science Foundation of China. The second author was partially supported by JSPS KAKENHI Grant Number 17K14180.
\end{acknowledgements}

\section{Preliminaries}

\subsection {} \label{ribbonmove}
Suppose $K$ is a ribbon knot in $S^3$, and $D$ is a ribbon disk in $B^4$ bounded by $K$. We recall a recipe in \cite[Section 1.4]{Ab} and \cite[Section 6.2]{GS} for drawing a handlebody decomposition of the exterior of $D$ in $B^4$. The ribbon knot can be turned into an unlink through some ribbon moves. A ribbon move means cutting and regluing along a band.  We put dots on each component of the unlink so that they denote 4-dimensional 1-handles. Corresponding to each ribbon move, we add a 4-dimensional 2-handle along an unknot with framing $0$ as shown in the bottom of Figure~\ref{fig: carve}. Then we obtain a handlebody decomposition of the exterior of $D$.

\begin{figure}[htb]
\begin{overpic}
[width=.3\textwidth]
{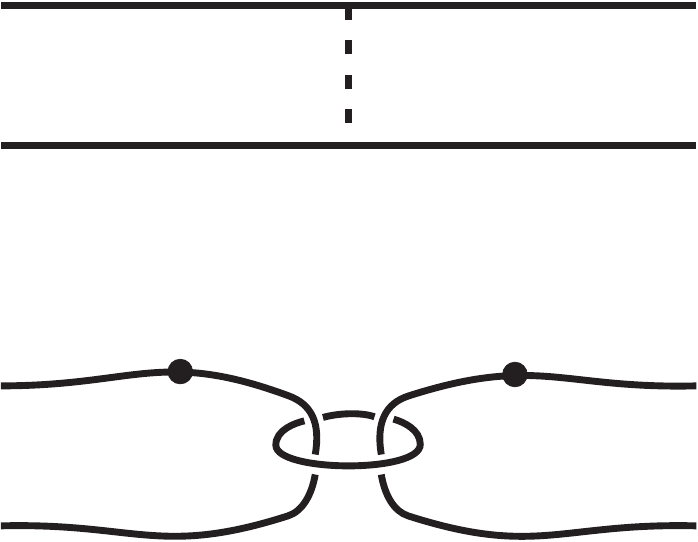}
\put(51,23){$0$}

\end{overpic}
\caption{A local picture of the handlebody decomposition of the exterior of $D$. The dashed line denotes a band along which a ribbon move made. }
\label{fig: carve}
\end{figure}

\subsection {}
A smooth knot $L$ in a contact 3-manifold $(M,\xi)$ is {\it Legendrian} if $L$ is everywhere tangent to $\xi$. A properly embedded smooth surface $S$ in a symplectic 4-manifold $(W, \omega)$ is {\it Lagrangian} if $i^{\ast}\omega=0$, where $i: S\rightarrow W$ is the inclusion map. In this paper, we consider Lagrangian surfaces in $(B^4,\omega_{st})$ which are bounded by a Legendrian knot in $(S^3, \xi_{st})$.   One can construct a Lagrangian surface in $(B^4,\omega_{st})$ whose boundary is a Legendrian link in $(S^3, \xi_{st})$ through a sequence of moves of three types below. See for example \cite{Ch1} and \cite{EHK}.
\begin{itemize}
    \item Adding a maximal Thurston-Bennequin unknot,
    \item Pinch move,
    \item Legendrian isotopy.
\end{itemize}
\begin{figure}[htb]
\begin{overpic}
[width=.8\textwidth]
{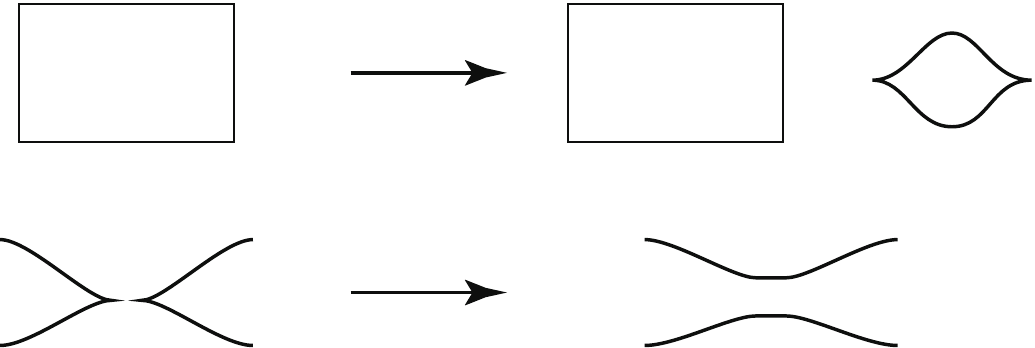}
\put(12.5,77){Legendrian}
\put(25,67){link}
\put(165,77){Legendrian}
\put(180,67){link}
\end{overpic}
\caption{The top arrow stands for adding a Legendrian unknot with maximal Thurston-Bennequin invariant. The bottom arrow is a pinch move.}
\label{fig: moves}
\end{figure}
Hence, if a Legendrian knot can be changed into a Legendrian unlink whose each component has maximal Thurston-Bennequin invariant $-1$ by a sequence of the (inverse) pinch moves and Legendrian isotopies, then the moves give a Lagrangian surface in $(B^4,\omega_{st})$. Such Lagrangian surfaces are said to be {\it decomposable} \cite{CGHS}.

We use the notation $\simeq$ to represent that the two surfaces in $B^4$ are smoothly isotopic relative to boundary.

\subsection {}
Let $L$ be a Legendrian knot in a contact 3-manifold $(Y, \xi)$. Perform $\frac{p}{q}$-surgery with respect to the contact framing, $pq\neq0$, we obtain a closed 3-manifold, and denote it by $Y_{\frac{p}{q}}(L)$. Let $\nu(L)$ be the standard tubular neighborhood of $L$. Extend the contact structure $\xi$ on $Y\setminus \nu(L)$ to $Y_{\frac{p}{q}}(L)$ by a tight contact structure on the new glued-up solid torus, we obtain a contact structure on $Y_{\frac{p}{q}}(L)$, and denote it by $\xi_{\frac{p}{q}}(L)$. This operation is called a \textit{contact $\frac{p}{q}$-surgery}. In \cite{dg}, Ding and Geiges proved that every closed contact 3-manifold can be obtained by contact $(\pm 1)$-surgery along
a Legendrian link in $(S^3, \xi_{st})$. So the contact $(\pm 1)$-surgery plays a fundamental role in the constructions of contact 3-manifolds. The contact $(-1)$-surgery, a.k.a {\it Legendrian surgery}, along a Legendrian knot in $(S^3, \xi_{st})$ yields a Stein fillable contact 3-manifold. However, the contact $(+1)$-surgery does not yield a symplectically fillable contact 3-manifold in general. Conway, Etnyre and Tosun gave a necessary and sufficient condition for a contact $(+1)$-surgery to be strongly symplectically fillable.

\begin{theorem}[\cite{CET}]
Let $L$ be a Legendrian knot in $(S^3, \xi_{st})$. Then contact $(+1)$-surgery on $L$ is strongly symplectically fillable if and only if $L$ bounds a Lagrangian disk in $(B^4,\omega_{st})$. In addition, if $L$ bounds a decomposable Lagrangian disk in $(B^4,\omega_{st})$, then the filling can be taken to be Stein.
\end{theorem}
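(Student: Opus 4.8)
The plan is to prove the two implications by analyzing the symplectic effect of two complementary operations, \emph{attaching a Weinstein $2$-handle} and \emph{removing a standard neighborhood of a Lagrangian disk}, and then exploiting the fact that contact $(+1)$- and contact $(-1)$-surgeries along dual Legendrian knots cancel one another \cite{dg}. Throughout I use that a Lagrangian disk filling forces $tb(L)=-1$, so that contact $(+1)$-surgery on $L$ is topologically $0$-framed surgery $S^3_0(L)$.

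For the ``if'' direction, suppose $L$ bounds a Lagrangian disk $D\subset(B^4,\omega_{st})$. I would take a standard Weinstein neighborhood $N(D)$ of $D$, symplectomorphic to a fixed model inside $T^*D^2$ with $D$ the zero section, and set $W=B^4\setminus\mathrm{int}\,N(D)$. Since $\partial D=L$ and $N(D)\cong D^2\times D^2$ meets $S^3$ in a tubular neighborhood of $L$, the normal framing of $D$ is the Seifert framing, so $\partial W=S^3_0(L)$ is connected. One then arranges the Liouville vector field of $\omega_{st}$ to point out of $W$ along the interior part of $\partial N(D)$, making $\partial W$ convex, so that $W$ is a strong symplectic filling. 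The technical heart is a local-model computation identifying the induced contact structure on $\partial W$ with $\xi_{+1}(L)$: one checks that the reglued surgery solid torus carries exactly the tight extension defining contact $(+1)$-surgery. For the Stein refinement, if $D$ is decomposable it is assembled from maximal-$tb$ unknots, pinch moves and Legendrian isotopies, each compatible with a Weinstein structure; carrying out the neighborhood removal within the Weinstein category then endows $W$ with a Stein structure.

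For the ``only if'' direction, suppose $(X,\omega)$ is a strong symplectic filling of $(S^3_0(L),\xi_{+1}(L))$. Let $L^{*}\subset S^3_0(L)$ be the dual Legendrian knot, the core of the surgery solid torus; by surgery cancellation \cite{dg}, contact $(-1)$-surgery on $L^{*}$ recovers $(S^3,\xi_{st})$. I would attach a Weinstein $2$-handle to $X$ along $L^{*}$, producing a strong filling $\widehat X=X\cup h$ of $(S^3,\xi_{st})$. The cocore $C$ of $h$ is a Lagrangian disk in $\widehat X$, and by the same cancellation its boundary $\partial C\subset S^3$ is Legendrian isotopic to $L$. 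Invoking the theorem of Gromov and Eliashberg that every strong filling of $(S^3,\xi_{st})$ is a symplectic blow-up of $(B^4,\omega_{st})$, one blows down the exceptional spheres, which may be taken disjoint from the compact disk $C$, so that $\widehat X$ becomes $(B^4,\omega_{st})$ and $C$ descends to a Lagrangian disk filling $L$.

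The main obstacle is the local symplectic identification in the ``if'' direction: verifying that removing a standard Lagrangian-disk neighborhood produces precisely the tight gluing of $\xi_{+1}(L)$, rather than a different contact surgery coefficient, and simultaneously arranging the Liouville field to make the new boundary convex. A secondary difficulty is propagating the decomposable structure through this removal to upgrade the filling from strong to Stein. In the ``only if'' direction the delicate point is ensuring that the cocore descends to a Lagrangian disk in the \emph{standard} ball, which is exactly where the uniqueness of symplectic fillings of $(S^3,\xi_{st})$ is needed.
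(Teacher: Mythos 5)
This statement is imported verbatim from \cite{CET}; the paper gives no proof of it, so there is nothing internal to compare against. Your outline does, however, reconstruct the strategy of the original proof in \cite{CET} essentially correctly: the ``if'' direction by excising a standard Weinstein neighborhood of the Lagrangian disk and identifying the induced contact structure on the new boundary with $\xi_{+1}(L)$, and the ``only if'' direction by attaching a Weinstein $2$-handle along the dual Legendrian knot, invoking cancellation of contact $(\pm1)$-surgeries and the Gromov--Eliashberg/McDuff classification of strong fillings of $(S^3,\xi_{st})$, with the cocore supplying the Lagrangian disk. The points you defer are exactly where the real work lies, and one of them deserves more than the phrase ``may be taken disjoint'': the exceptional spheres produced by the classification are symplectic while the cocore $C$ is Lagrangian, and vanishing of the algebraic intersection $[C]\cdot[E]$ does not by itself give geometric disjointness, so an argument (e.g.\ locating the exceptional representatives away from the handle, or choosing an almost complex structure adapted to $C$) is required before blowing down. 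As written this is a faithful proof sketch of the cited result rather than a complete proof.
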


\subsection {}
We recall a topological characterization of Stein 4-manifolds given by Eliashberg and Gompf.

\begin{theorem}[\cite{G}]
\label{char}
A smooth, compact, connected, oriented 4-manifold $X$ admits a Stein structure (inducing the given orientation) if and only if it can be presented as a handlebody by attaching 2-handles to a framed link in $\partial(D^{4}\cup \text{1-handles})=\sharp m S^{1}\times S^{2}$, where the link is drawn in a Legendrian standard form and the framing coefficient on each link component $K$ is given by $tb(K)-1$.
\end{theorem}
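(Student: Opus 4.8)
The statement is the Eliashberg--Gompf characterization, so I would follow the structure of the original arguments and prove the two implications separately. The plan is to treat the ``only if'' direction via complex Morse theory and the ``if'' direction via Eliashberg's handle-attachment construction.

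For the ``only if'' direction, suppose $X$ carries a Stein structure. By definition there is a strictly plurisubharmonic exhausting Morse function $\varphi\colon X\to\mathbb{R}$. First I would invoke the classical fact from complex Morse theory that the Morse index of every critical point of a strictly plurisubharmonic function on a complex surface is at most the complex dimension, here $2$. Hence the sublevel sets of $\varphi$ furnish a handle decomposition of $X$ with handles only of index $0$, $1$, and $2$, which can be arranged to be built from $D^4$ (the index-$0$ handle), some $1$-handles, and $2$-handles; the boundary of the subcritical part is then $\sharp\, m\, S^1\times S^2$ as required. The regular level sets inherit contact structures from the field of complex tangencies, and I would show that the descending (attaching) sphere of each index-$2$ critical point is isotropic, hence Legendrian, with respect to this induced contact structure. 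The framing computation --- that the attaching framing equals $tb(K)-1$ --- then follows by comparing the Thurston--Bennequin (contact) framing of $K$ with the framing dictated by the symplectic/complex handle, which differ by exactly $-1$.

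For the ``if'' direction, I would start with the standard Stein structure on $D^4$ and attach the handles of the given decomposition one at a time, showing that the Stein condition persists at each stage. Attaching a $1$-handle is a subcritical operation and preserves Stein-ness by a standard argument. The crux is Eliashberg's theorem that a Stein $2$-handle can be glued along a Legendrian knot $K$ in the contact boundary precisely when the attaching framing is $tb(K)-1$, with the result again a Stein domain. Here I would produce the explicit model Weinstein/Stein $2$-handle, identify a standard contact neighborhood of the Legendrian attaching sphere inside the boundary of the Stein domain built so far, and glue the model to the filling along this neighborhood while keeping the Levi form positive.

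The hard part will be this ``if'' direction, i.e.\ extending the complex/Stein structure over a $2$-handle. The delicate points are, first, constructing the model Stein handle with the correct complex geometry near its isotropic attaching sphere, and second, verifying that the induced contact structure on the boundary of the existing Stein domain is contactomorphic to the standard model on a neighborhood of $K$, so that the gluing can be performed compatibly. The $-1$ shift in the framing is forced exactly by this compatibility requirement, so tracking it carefully is the subtle bookkeeping step that underlies the whole characterization.
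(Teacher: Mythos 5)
The paper does not prove this statement: it is quoted verbatim as a known theorem of Eliashberg and Gompf, with a citation to Gompf's \emph{Handlebody construction of Stein surfaces}, and is used later only as a black box (to upgrade the handle diagrams of $W_2\cup h$, $W_{11}\cup k_1\cup k_2$, etc.\ to Stein handlebodies). So there is no in-paper argument to compare yours against; what I can say is that your outline is the standard proof strategy and matches the actual arguments of Eliashberg and Gompf. Your ``only if'' direction (index bound for strictly plurisubharmonic Morse functions, isotropy of descending spheres in the level-set contact structures, and the $tb-1$ framing comparison) and your ``if'' direction (subcritical handles plus Eliashberg's model Stein $2$-handle glued along a Legendrian knot with framing $tb-1$) are exactly the two halves of the published proof. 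That said, your write-up is a road map rather than a proof: the model $2$-handle construction, the standard-neighborhood/contactomorphism step, and the identification of the induced contact structure on $\sharp m\, S^1\times S^2$ with the standard one (needed to make sense of ``Legendrian standard form'' and to realize arbitrary front diagrams) are each substantial theorems that you invoke without carrying out. For the purposes of this paper that is entirely appropriate --- the authors themselves take the theorem as given --- but if you intended a self-contained proof, those are the places where the real work lives.
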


Any Stein 4-manifold has a special property about smooth embedded surfaces by the adjunction inequality coming from Seiberg-Witten theory.
Here, we state the following result.
\begin{theorem}[\cite{LM}]
\label{AMtheorem}
If $X$ is a 4-dimensional Stein manifold and $\Sigma\subset X$ is a closed, connected, embedded surface of genus $g$ in $X$, then
$$[\Sigma]^2+|\langle c_1(X),[\Sigma]\rangle| \le 2g(\Sigma)-2,$$
unless $\Sigma$ is a sphere with $[\Sigma]=0$ in $H_2(X,{\mathbb Z})$.
\end{theorem}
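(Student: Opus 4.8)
The plan is to reduce this statement about embedded surfaces in the (open) Stein manifold $X$ to the Seiberg--Witten adjunction inequality on a \emph{closed} 4-manifold, by first enlarging $X$ to a closed complex surface with good Seiberg--Witten properties. Concretely, I would first invoke the embedding theorem of Lisca and Matić \cite{LM}: every Stein domain $X$ admits a holomorphic embedding, as a complex subdomain, into a closed, minimal, complex surface $Z$ of general type with $b_1(Z)=0$ and $p_g(Z)\ge 1$. Since the embedding is holomorphic, the complex tangent bundles agree over $X$, so $c_1(Z)|_{X}=c_1(X)$ in $H^2(X;\mathbb{Z})$. The point of ``minimal of general type'' (with $p_g\ge 1$) is that $Z$ then has $b_2^+(Z)\ge 3>1$, and, by Witten's computation of the Seiberg--Witten invariants of Kähler surfaces, both $\pm K_Z=\mp c_1(Z)$ are Seiberg--Witten basic classes, $SW_Z(\pm K_Z)\neq 0$. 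This embedding step is the one I expect to carry the real weight of the argument, since manufacturing a closed minimal surface of general type that contains a prescribed Stein domain with controlled canonical class is genuinely delicate; the Seiberg--Witten input afterward is by now standard.

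Granting the embedding, the surface $\Sigma\subset X\subset Z$ becomes a closed, connected, embedded, oriented surface sitting inside a closed 4-manifold with $b_2^+\ge 2$ carrying the basic class $K_Z$. Because $\Sigma$ lies entirely inside $X$, both numerical quantities in the inequality are computed within $X$ and are unchanged by the inclusion into $Z$: the self-intersection $[\Sigma]^2$ is the normal Euler number and hence is the same in $X$ and in $Z$, while $\langle K_Z,[\Sigma]\rangle=-\langle c_1(Z),[\Sigma]\rangle=-\langle c_1(X),[\Sigma]\rangle$ by the compatibility $c_1(Z)|_X=c_1(X)$. I would then apply the generalized adjunction inequality for Seiberg--Witten basic classes on closed 4-manifolds: for $\Sigma$ not a nullhomologous sphere, the nonvanishing $SW_Z(\pm K_Z)\neq 0$ forces
\[
[\Sigma]^2+|\langle K_Z,[\Sigma]\rangle| \le 2g(\Sigma)-2 .
\]
For $[\Sigma]^2\ge 0$ this is the inequality of Kronheimer--Mrowka (and Morgan--Szabó--Taubes); for $[\Sigma]^2<0$ it is the higher-type adjunction inequality of Ozsváth--Szabó. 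Choosing the sign $\pm K_Z$ (equivalently, reversing the orientation of $\Sigma$) is exactly what produces the absolute value.

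Substituting $|\langle K_Z,[\Sigma]\rangle|=|\langle c_1(X),[\Sigma]\rangle|$ into the displayed inequality yields precisely $[\Sigma]^2+|\langle c_1(X),[\Sigma]\rangle|\le 2g(\Sigma)-2$, as claimed. The stated exceptional case is forced by the hypothesis of the closed adjunction inequality: a nullhomologous sphere has $[\Sigma]^2=0$ and $\langle c_1(X),[\Sigma]\rangle=0$, so the asserted bound would read $0\le -2$, which is absurd; hence this case must be excluded, matching the exception in the statement. The one bookkeeping point I would verify is that a surface that is nullhomologous in $X$ but essential in $Z$ causes no trouble, since its square and its pairing with $c_1$ still vanish and the closed inequality applies verbatim, so the only genuine failure is indeed the nullhomologous sphere. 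As emphasized above, the main obstacle is entirely in the embedding step; once $Z$ is in hand, the conclusion is a transcription of the closed adjunction inequality.
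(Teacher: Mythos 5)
The paper does not prove this statement at all: it is imported verbatim as a citation to \cite{LM}, so there is no internal proof to compare against. Your strategy is the standard one behind that citation --- embed the Stein domain holomorphically into a closed minimal complex surface $Z$ of general type with $b_2^+>1$ (Lisca--Mati\'c), note $c_1(Z)|_X=c_1(X)$ and $SW_Z(\pm K_Z)\neq 0$, and then transcribe the closed-manifold adjunction inequality. That outline is correct for the cases your cited inputs actually cover.

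There is, however, a genuine gap exactly where the paper needs the theorem. Your two closed-manifold inputs are Kronheimer--Mrowka/Morgan--Szab\'o--Taubes (which requires $[\Sigma]^2\ge 0$) and the Ozsv\'ath--Szab\'o higher-type adjunction inequality (which requires $g\ge 1$). Neither covers a homologically essential \emph{sphere} of negative self-intersection, and no such general statement can exist: an exceptional sphere $E$ in a blow-up of a manifold with nonvanishing Seiberg--Witten invariants satisfies $[E]^2+|\langle K,[E]\rangle|=0>-2$, so ``$Z$ has a basic class'' is not enough in genus zero. Yet the only way the paper uses the theorem is precisely this case: ruling out embedded spheres of square $-1$ in a Stein domain. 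To close the gap you must use the \emph{minimality} of $Z$, not merely the nonvanishing of $SW_Z$; for instance, a smoothly embedded $(-1)$-sphere would exhibit $Z$ as a smooth connected sum $Z'\#\overline{\mathbb{CP}}{}^2$, and the blow-up formula together with the fact that the only basic classes of a minimal surface of general type are $\pm K_Z$ (with $K_Z^2>0$) yields a contradiction; spheres of square $\le -2$ require a similar reflection/blow-up-formula argument to bound $|\langle K_Z,[\Sigma]\rangle|$. Your ``bookkeeping point'' about surfaces nullhomologous in $X$ but essential in $Z$ is moot --- the inclusion $X\hookrightarrow Z$ need not inject on $H_2$, but a class that is trivial in $X$ pairs trivially with $c_1(X)$ and has zero square, which is all you use --- so the sphere case is the one substantive omission.
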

This theorem implies that there is no smoothly embedded square $-1$ sphere in any Stein 4-manifold.

\section{Non-isotopic Lagrangian disks}

The Legendrian knot of knot type $\overline{9_{46}}$ in Figure~\ref{946} has two Lagrangian disk fillings as shown in Figure~\ref{fig: 2lag}. Let $D_1$ and $D_2$ be the two Lagrangian disks, and $W_1$ and $W_2$ the exteriors of $D_1$ and $D_2$ respectively. In \cite{E}, Ekholm shows that $D_1$ and $D_2$ are not Hamiltonian isotopic to each other using a DGA. Our following result implies his result.

\begin{figure}[htb]
\begin{overpic}
[width=\textwidth]
{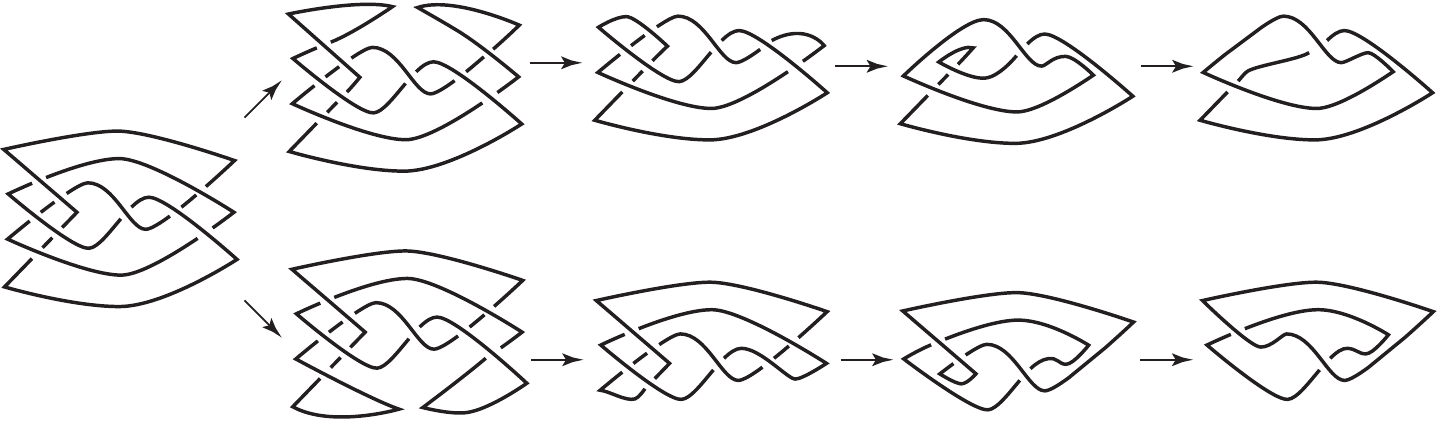}
\end{overpic}
\caption{Two Lagrangian disks $D_1$ and $D_2$. The arrows are either  inverse pinch moves, or Legendrian Reidemeister moves.  }
\label{fig: 2lag}
\end{figure}

\begin{proposition}
\label{D1D2notisotopic}
$D_1\not\simeq D_2$. $W_1$ and $W_2$ are diffeomorphic.
\end{proposition}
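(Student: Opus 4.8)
The plan is to prove the two assertions by different means: the diffeomorphism $W_1\cong W_2$ by explicit Kirby calculus, and the non-isotopy $D_1\not\simeq D_2$ by closing up to a $2$-knot in $S^4$ and computing the fundamental group of its complement. The guiding principle is that, precisely because the exteriors turn out to be diffeomorphic, no invariant of $W_1$ or $W_2$ as an abstract $4$-manifold can separate the disks; the non-isotopy must instead be read off from how each exterior is attached to $L$ along its boundary.

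For the diffeomorphism I would first extract handlebody diagrams of $W_1$ and $W_2$ from the two move sequences in Figure~\ref{fig: 2lag}. Each decomposable Lagrangian disk is in particular a ribbon disk, with the inverse pinch moves playing the role of the bands of a ribbon presentation, so the recipe of Section~\ref{ribbonmove} applies verbatim: the underlying unlink contributes dotted $1$-handles and each band contributes a $0$-framed $2$-handle. Having drawn the two Kirby pictures, I would produce an explicit sequence of isotopies, handle slides, and $1$/$2$-handle cancellations carrying the diagram of $W_1$ to that of $W_2$. I expect this step to be routine but bookkeeping-intensive; its payoff is a concrete diffeomorphism $\Phi\colon W_1\to W_2$.

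For the non-isotopy, write $S^4=B^4_+\cup_{S^3}B^4_-$ and put $N=S^3\setminus\nu(L)$, the exterior of $L$. Regarding $D_1\subset B^4_+$ and $D_2\subset B^4_-$, their union $S=D_1\cup_L(-D_2)$ is a smoothly embedded $2$-sphere whose complement splits as $S^4\setminus\nu(S)=W_1\cup_N W_2$; van Kampen then gives the pushout $\pi_1(S^4\setminus S)\cong\pi_1(W_1)\ast_{\pi_1(N)}\pi_1(W_2)$, glued along the inclusions $\iota_i\colon\pi_1(N)\to\pi_1(W_i)$. If $D_1\simeq D_2$ rel boundary, then an ambient isotopy of $B^4_-$ fixing $S^3$ carries $-D_2$ to $-D_1$; applying it on the $B^4_-$ factor isotopes $S$ to $\widehat D_1=D_1\cup_L(-D_1)$, so $\pi_1(S^4\setminus S)$ would be forced to equal $\pi_1(W_1)\ast_{\pi_1(N)}\pi_1(W_1)$ with \emph{both} gluings equal to $\iota_1$. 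I would therefore read presentations of $\pi_1(W_1)$, $\pi_1(W_2)$ and of $\iota_1,\iota_2$ off the Wirtinger data of the ribbon diagrams, and then separate the two pushouts, for instance by comparing the numbers of surjections onto a well-chosen finite group or by comparing an Alexander-type metabelian quotient. Any such discrepancy shows $S\not\simeq\widehat D_1$, hence $D_1\not\simeq D_2$.

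The heart of the matter, and the main obstacle, is exactly this last comparison. Since $W_1\cong W_2$, the factors $\pi_1(W_1)$ and $\pi_1(W_2)$ are already isomorphic, so the two pushouts can differ only through the placement of $\pi_1(N)$ inside each factor; equivalently, the whole content is that the diffeomorphism $\Phi$ of the first part does \emph{not} intertwine $\iota_1$ with $\iota_2$. Pinning this down means tracking the images of the meridians of $L$ under each inclusion and selecting a finite test group sensitive to that difference, which is where the genuine computation sits. As a fallback, the same closed model supports a gauge-theoretic obstruction: the double branched cover $\Sigma_2(S^4,S)$ equals $S^4$ when $S$ is unknotted, so showing $\Sigma_2(S^4,S)\ne S^4$ — to which Seiberg–Witten-type constraints such as the adjunction inequality of Theorem~\ref{AMtheorem} might be applied on its two halves — would again rule out $D_1\simeq D_2$. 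Either route yields $D_1\not\simeq D_2$, which recovers the result of \cite{E}, since smooth non-isotopy implies non-Hamiltonian-isotopy.
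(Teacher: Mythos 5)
Your first half (the diffeomorphism $W_1\cong W_2$ via the ribbon recipe and Kirby calculus) is fine and is essentially what the paper does --- there the two handle diagrams are literally mirror images of one another, so no slides are even needed. The non-isotopy half, however, has a genuine gap. Your reduction is sound: if $D_1\simeq D_2$ rel boundary then the $2$-sphere $S=D_1\cup_L(-D_2)$ is ambiently isotopic to $D_1\cup_L(-D_1)$, and van Kampen identifies the two knot groups as pushouts over $\pi_1(N)$. But the decisive step --- showing these pushouts actually differ --- is exactly what you defer, and it is not automatic. Since both $\iota_1,\iota_2\colon\pi_1(N)\to\pi_1(W_i)$ are surjective (ribbon disk exteriors) and $\pi_1(W_1)\cong\pi_1(W_2)$, the two groups you must separate are $\pi_1(N)/\langle\langle \ker\iota_1\rangle\rangle\cong\pi_1(W_1)$ and $\pi_1(N)/\langle\langle \ker\iota_1\cup\ker\iota_2\rangle\rangle$; you therefore need to exhibit a concrete element of $\ker\iota_2$ not lying in $\langle\langle\ker\iota_1\rangle\rangle$, or a finite quotient that sees the difference. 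That computation is the entire content of the proof, and without it nothing is established. (It is in fact feasible: Proposition~\ref{Cork} of the paper carries out a computation of precisely this flavor, obtaining the Baumslag--Solitar group $\langle x,y\mid y^{-1}xy=x^2\rangle$ on one side and $\mathbb{Z}$ on the other after attaching a $2$-handle along a curve $c$ versus $\iota(c)$; so your route could likely be completed, and would even yield a topological non-isotopy statement stronger than the paper's. But as written it is a plan, not a proof.) Your fallback is also misstated: under your own hypothesis $S$ is isotopic to $D_1\cup(-D_1)$, which is a \emph{nontrivial} ribbon $2$-knot (its group is $BS(1,2)$), so the comparison ``$\Sigma_2(S^4,S)=S^4$ when $S$ is unknotted'' is not the relevant one, and Seiberg--Witten-type invariants are of no help in distinguishing the resulting closed $4$-manifolds.

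For contrast, the paper's argument realizes your guiding principle (``the information is in how the exterior meets the boundary'') by a smooth rather than homotopy-theoretic invariant. Both $\partial W_1$ and $\partial W_2$ are canonically the $0$-surgery $S^3_0(\overline{9_{46}})$, and an isotopy $D_1\simeq D_2$ rel boundary would force the \emph{identity} map $\partial W_1\to\partial W_2$ to extend to a diffeomorphism $W_1\to W_2$. One then attaches a single $(-1)$-framed $2$-handle $h$ along a fixed curve in this common boundary: $W_2\cup h$ simplifies, after cancelling a $1$/$2$-handle pair, to a Stein handlebody, while $W_1\cup h$ is seen by handle slides to contain an embedded sphere of self-intersection $-1$ and hence admits no Stein structure by Theorem~\ref{AMtheorem}. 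The two manifolds $W_i\cup h$ are therefore not diffeomorphic, contradicting the assumed extension. This is shorter than your route but, being gauge-theoretic, only rules out smooth isotopy.
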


\begin{proof}
According to the recipe in Subsection~\ref{ribbonmove},  $W_1$ and $W_2$ have handle decompositions as shown in the top left and the bottom left of Figure~\ref{fig: proof1} respectively. By symmetry, $W_1$ is  diffeomorphism to $W_2$.

By assumption, $\partial D_1$ and $\partial D_2$ are the same knot in $S^3$, so their exteriors in $S^3$ coincide pointwisely. Both $\partial W_1$ and $\partial W_2$ can be seen as the $0$-surgery along $\partial D_1=\partial D_2$.
Extending the identity of the knot exteriors to the $0$-surgeries, we get an identity map $Id:\partial W_1\to \partial W_2$.

Suppose that $D_1\simeq D_2$.
Then the map $Id:\partial W_1\to \partial W_2$ can be extended to a diffeomorphism $f:W_1\to W_2$.
We attach a $(-1)$-framed 2-handle $h$ on $W_1$ as in the top second diagram in Figure~\ref{fig: proof1}. Then $W_1\cup h$ is diffeomorphic to $W_2\cup h$ via $f$. 
Since both $\partial W_1$ and $ \partial W_2$ are identified to the $0$-surgery along $\overline{9_{46}}$, we can replace the handle decomposition of $W_1$ in the top second diagram by that of $W_2$, and obtain the bottom second diagram in Figure~\ref{fig: proof1} which represents
$W_2\cup h$.

\begin{figure}[htb]
\begin{overpic}
[width=\textwidth]
{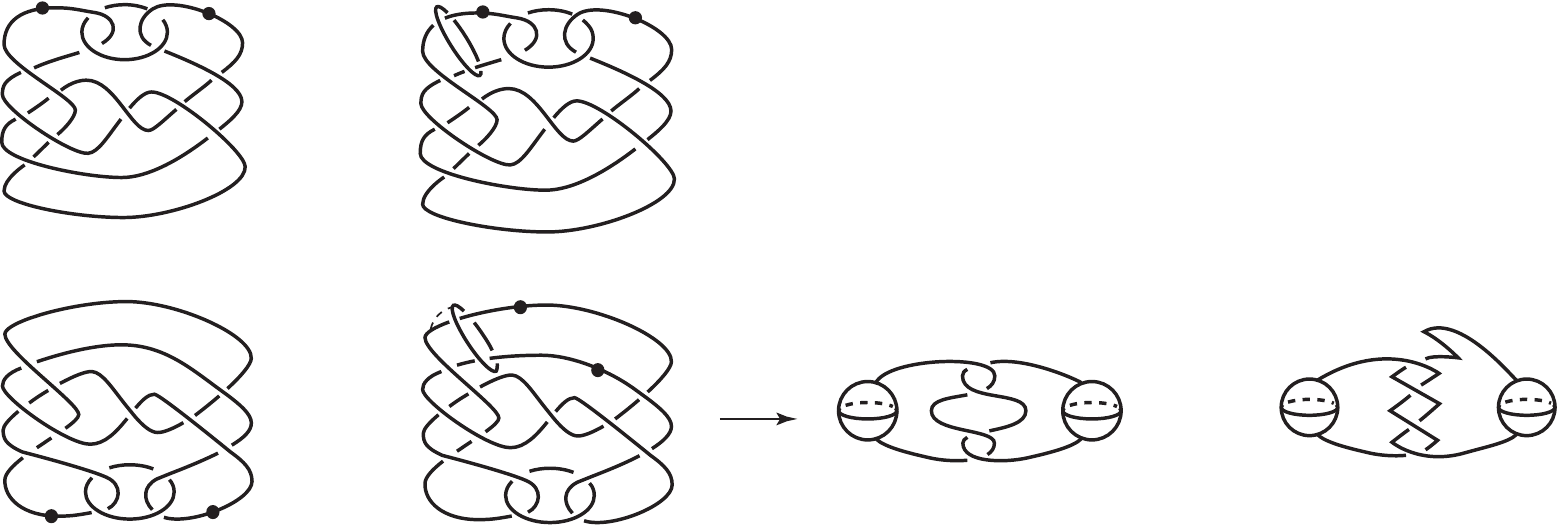}

\put(122, 122){$0$}
\put(96, 122){$-1$}
\put(28, 122){$0$}

\put(100, 52){$-1$}
\put(310, 43){$-1$}
\put(230, 41){$-1$}

\put(28, -7){$0$}
\put(123, -8){$0$}


\end{overpic}
\vspace{3mm}
\caption{The top first diagram is $W_1$. The top second diagram is $W_1\cup h$.  The $(-1)$-framed unknots stand for the attached 2-handles $h$. 
The bottom first diagram is $W_2$. The bottom second diagram is $W_2\cup h$. The bottom horizontal arrow is a cancellation of 1/2-canceling pair. The bottom last diagram is a Stein handlebody. The coefficients $-1$ in the last diagram, and all other Stein handlebody diagrams in this paper, denote Legendrian surgeries. }
\label{fig: proof1}
\end{figure}


Canceling a 1/2-canceling pair in the handle decomposition of $W_2\cup h$ in the bottom second diagram of Figure~\ref{fig: proof1}, we obtain a simplified handle decomposition of $W_2\cup h$ which is shown in the bottom third diagram of Figure~\ref{fig: proof1}. Furthermore, by Gompf's result in \cite{G}, we can transform it to a Stein handlebody as in the second fourth diagram.
This means $W_2\cup h$ admits a Stein structure.

On the other hand, in $W_1\cup h$, we can find an embedded square $-1$ sphere. Indeed, as shown in Figure~\ref{fig: blowdown}, the handlebody decomposition of $W_1\cup h$ in the top second diagram in Figure~\ref{fig: proof1} can be transformed so that it contains an isolated $(-1)$-framed 2-handle.  
So $W_1\cup h$ never admit any Stein structure by Theorem~\ref{AMtheorem}.

\begin{figure}[htb]
\begin{overpic}
[width=.8\textwidth]
{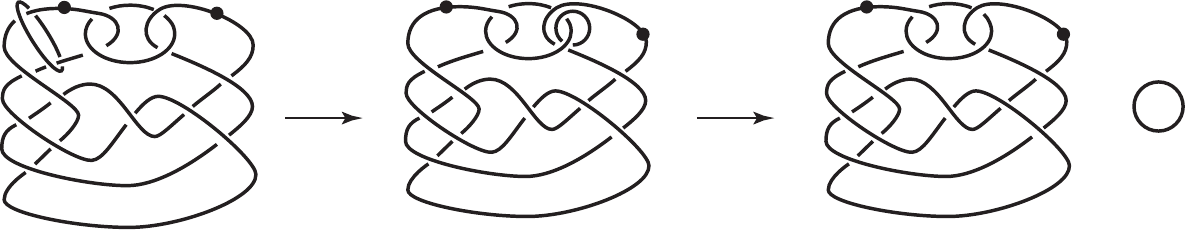}
\put(30,57){$0$}
\put(128,57){$0$}
\put(0,57){$-1$}
\put(142,42.5){$-1$}
\put(230,57){$1$}
\put(280,38){$-1$}
\end{overpic}
\vspace{3mm}
\caption{The first arrow stands for  handle sliding the $(-1)$-framed 2-handle over a 1-handle. The second arrow stands for  handle sliding the $0$-framed 2-handle over the $(-1)$-framed 2-handle. }
\label{fig: blowdown}
\end{figure}

As a result, $W_1\cup h$ and $W_2\cup h$ are not diffeomorphic. This implies $D_1\not\simeq D_2$.
\end{proof}

According to \cite{EHK}, the Legendrian knot of knot type $\overline{9_{46}}\sharp\overline{9_{46}}$ shown in Figure \ref{fig: LL} bounds four Lagrangian disks $D_{11}$, $D_{12}$, $D_{21}$ and $D_{22}$ depicted in Figure \ref{fig: Wij}. Let $W_{ij}$ denote the exteriors of disks $D_{ij}$ in $B^4$, where $i,j=1,2$.

\begin{figure}[htb]
\begin{overpic}
[width=\textwidth]
{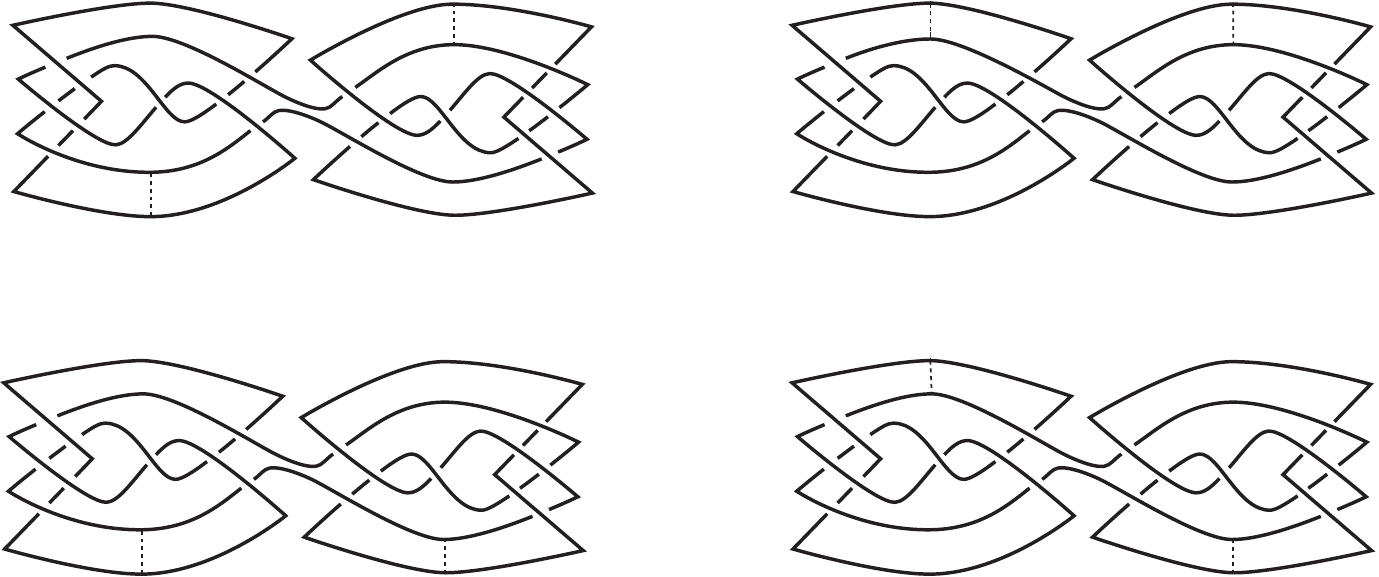}
\put(68, 76){$D_{22}$}
\put(270, 76){$D_{12}$}
\put(68, -8){$D_{21}$}
\put(270, -8){$D_{11}$}
\end{overpic}
\vspace{3mm}

\caption{Four Lagrangian disks $D_{ij}$, where $1\le i,j\le 2$. The dotted lines of each picture present the positions of saddle points of the slice disks.}
\label{fig: Wij}
\end{figure}

The {\it boundary connected sum} of two properly embedded surfaces $S_1$ and $S_2$ in $B^4$ ($\partial S_i\neq \emptyset$) is removing two disk neighborhoods in $\partial B^4$ of two points in $\partial S_1$ and $\partial S_2$, respectively, and connecting the two arcs of $\partial S_1$ and $\partial S_2$ in the removed disks by an embedded band $I\times I$.
Here $I\times \partial I$ is attached to the two arcs and $\partial I\times I$ is embedded in $\partial B^4$.
In general, if orientations of $\partial S_1$ and $\partial S_2$ are given, then the boundary connected sum of $S_1$ and $S_2$ has two isotopy types. One is the case where the orientation of the boundary of the resulted surface is consistent with the given orientations of $\partial S_1$ and $\partial S_2$, and the other is the inconsistent case.
The result of boundary connected sum of two surfaces $S_1$ and $S_2$ via any band is denoted by $S_1\natural S_2$.
See Figure~\ref{fig: band}.
\begin{figure}[thb]
\begin{overpic}
[width=.9\textwidth]
{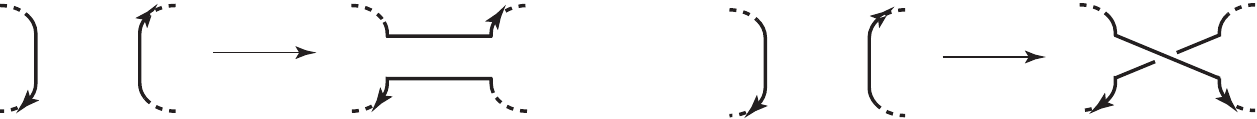}
\put(20, -14){straight band surgery}
\put(-5, 10){$S_1$}
\put(40, 10){$S_2$}
\put(212, -14){twisted band surgery}
\put(182, 10){$S_1$}
\put(227, 10){$S_2$}
\end{overpic}

\vspace{6mm}
\caption{Two types of motion pictures of boundary connected sum of surfaces.
Here, the straight band surgery gives a consistent orientation, while the twisted band surgery gives an inconsistent orientation.}
\label{fig: band}
\end{figure}

The boundary connected sums of the two Lagrangian disks $D_1$ and $D_2$ have four possibilities depending on the choices of $D_i$ and two ways of boundary connected sums. We remark that $D_{ij}$ in Figure~\ref{fig: Wij} is the boundary connected sum of the Lagrangian disks $D_i$ and $D_j$ for $i,j\in\{1,2\}$.

Here we prove the following:

\begin{proposition}
\label{Dijnotisotopic}
$D_{11}\not\simeq D_{22}$, $D_{12}\not\simeq D_{21}$.  Both of these two pairs of disks have diffeomorphic exteriors.
\end{proposition}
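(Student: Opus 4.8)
The plan is to run the obstruction argument of Proposition~\ref{D1D2notisotopic} one summand at a time, exploiting the fact that $D_{ij}$ is the boundary connected sum $D_i\natural D_j$ and that boundary connected sum behaves well with respect to both exteriors and Stein structures.

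First, the statement about exteriors. Since the exterior of a boundary connected sum of properly embedded disks is the boundary connected sum of the exteriors, we have $W_{ij}\cong W_i\natural W_j$. By Proposition~\ref{D1D2notisotopic} we have $W_1\cong W_2$, and as boundary connected sum is commutative up to diffeomorphism and respects diffeomorphisms of the factors, it follows at once that $W_{11}=W_1\natural W_1\cong W_2\natural W_2=W_{22}$ and $W_{12}=W_1\natural W_2\cong W_2\natural W_1=W_{21}$. This gives the diffeomorphic-exteriors claims for both pairs.

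Next, the non-isotopy. Fix the convention $D_{ij}=D_i\natural D_j$ with $D_j$ the second summand, and let $h$ be the $(-1)$-framed $2$-handle of Proposition~\ref{D1D2notisotopic}, now attached along the meridian $\gamma$ of the ribbon move of the second $\overline{9_{46}}$-summand; write $W\cup h_2$ for the result. Because $\gamma$ lies in the common boundary $\partial W_{ij}$ (the $0$-surgery on $\overline{9_{46}}\sharp\overline{9_{46}}$), and is the same for all four disks, any diffeomorphism $W_{ij}\to W_{kl}$ extending the identity on the boundary extends across $h_2$. The decisive input is that, since $D_1$ and $D_2$ are decomposable, their exteriors $W_1$ and $W_2$ are Stein by the theorem of Conway--Etnyre--Tosun, while Proposition~\ref{D1D2notisotopic} shows $W_2\cup h$ is Stein and $W_1\cup h$ contains an embedded sphere of square $-1$. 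Localizing these facts to the second summand gives
$$W_{22}\cup h_2\cong W_2\natural(W_2\cup h),\qquad W_{12}\cup h_2\cong W_1\natural(W_2\cup h),$$
both boundary connected sums of two Stein manifolds, hence Stein; whereas
$$W_{11}\cup h_2\cong W_1\natural(W_1\cup h),\qquad W_{21}\cup h_2\cong W_2\natural(W_1\cup h),$$
each of which contains the square $-1$ sphere coming from the $(W_1\cup h)$-factor, hence admits no Stein structure by Theorem~\ref{AMtheorem}.

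Now suppose $D_{11}\simeq D_{22}$ relative to the boundary. As in Proposition~\ref{D1D2notisotopic}, this produces a diffeomorphism $W_{11}\to W_{22}$ extending the identity on the boundary, which extends across $h_2$ to a diffeomorphism $W_{11}\cup h_2\cong W_{22}\cup h_2$; but the left-hand side carries a square $-1$ sphere and the right-hand side is Stein, contradicting Theorem~\ref{AMtheorem}. Hence $D_{11}\not\simeq D_{22}$, and the identical argument applied to $(W_{12},W_{21})$ gives $D_{12}\not\simeq D_{21}$. The main obstacle I anticipate is verifying that the Stein structure and the square $-1$ sphere genuinely localize to the chosen summand under the boundary connected sum --- that is, that attaching $h_2$ to one $\overline{9_{46}}$-piece does not interact with the other piece --- and confirming that a single-summand attachment (rather than attaching handles to both summands) is what cleanly separates the ordered pairs; both should follow by carrying out the handle calculus of Figures~\ref{fig: proof1} and~\ref{fig: blowdown} inside the relevant summand while leaving the rest of the diagram untouched.
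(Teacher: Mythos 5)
Your overall strategy---attach a single $(-1)$-framed $2$-handle over one $\overline{9_{46}}$-summand and play ``Stein'' against ``contains a square $-1$ sphere''---is sound, and is in fact essentially what the paper does later in the proof of Theorem~\ref{Nfilling} (the paper's own proof of Proposition~\ref{Dijnotisotopic} instead attaches two handles $k_1,k_2$, one over each summand, and runs the handle calculus of Figure~\ref{fig: D11D22} directly). However, the decomposition on which you hang everything is false: the exterior of a boundary connected sum of disks is \emph{not} the boundary connected sum of the exteriors, i.e.\ $W_{ij}\not\cong W_i\natural W_j$. The quickest way to see this is homological: $W_{ij}$ is the exterior of a slice disk of a knot, so $H_1(W_{ij})\cong\mathbb{Z}$ (generated by a meridian) and $\chi(W_{ij})=0$, whereas $H_1(W_i\natural W_j)\cong\mathbb{Z}^2$ and $\chi(W_i\natural W_j)=-1$. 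Concretely, $W_{ij}$ is obtained by gluing $W_i$ and $W_j$ along a thickened meridian disk --- equivalently, from $W_i\natural W_j$ by attaching one more $0$-framed $2$-handle joining the two pieces. This is visible in the paper's Lemma~\ref{Lem: presentations}: $\pi_1(W_{22})$ has three generators with $x_2$ appearing in both relators (an amalgamated product over the meridian), not the free product on four generators that $W_2\natural W_2$ would give; it is also exactly the ``connecting $0$-framed $2$-handle'' of Figure~\ref{fig: 2handle} that the paper must deal with.

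This error propagates into both halves of your argument. The diffeomorphisms $W_{11}\cong W_{22}$ and $W_{12}\cong W_{21}$ do hold, but your derivation of them from functoriality of $\natural$ does not; the paper gets them from the evident symmetry of the split-looking handle diagrams in Figure~\ref{fig: D11D22}. More seriously, the identification $W_{22}\cup h_2\cong W_2\natural(W_2\cup h)$ and the conclusion ``boundary connected sum of Stein domains is Stein, hence $W_{22}\cup h_2$ is Stein'' do not follow as stated: one must check that a Stein structure exists on the union \emph{including the extra connecting $2$-handle}, which requires putting that handle's attaching circle in Legendrian position with the correct framing (this is precisely the content of Figures~\ref{fig: 2handle} and~\ref{fig: Connect}). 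The square $-1$ sphere half of your argument is more robust, since $W_1\cup h$ does sit inside $W_{11}\cup h_2$ as a codimension-zero submanifold regardless of how the two pieces are glued; but as written the Stein half has a genuine gap, and it is the half that does the real work.
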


\begin{figure}[htb]
\begin{overpic}
[width=\textwidth]
{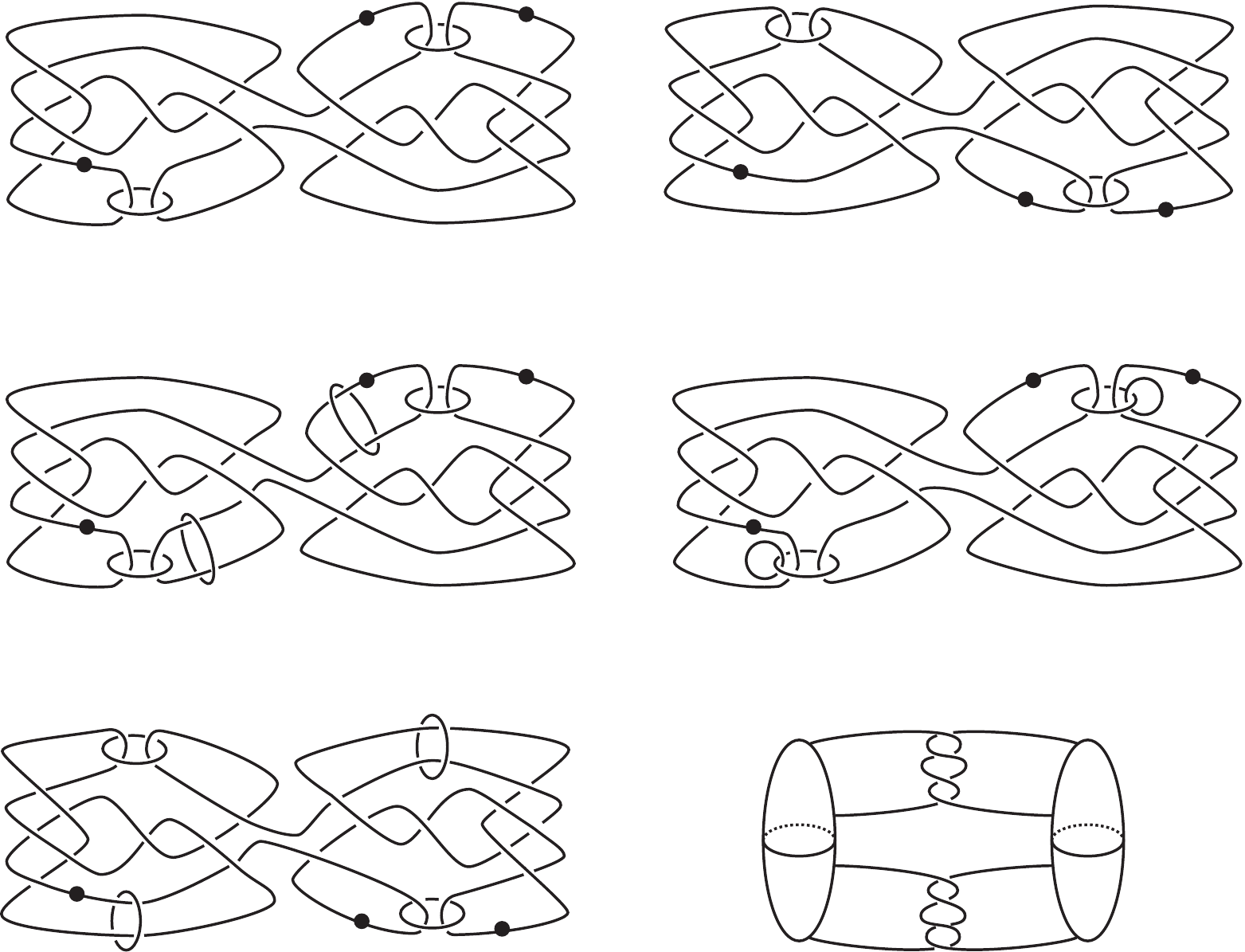}

\put(230, 255){$0$}
\put(125, 252){$0$}
\put(315, 207){$0$}
\put(37, 205){$0$}
\put(80, 195){$W_{22}$}
\put(270, 195){$W_{11}$}

\put(90, 168){$-1$}
\put(337, 155){$-1$}
\put(317, 148){$0$}
\put(125, 148){$0$}
\put(180, 130){$\cong$}
\put(203, 110){$-1$}
\put(230, 100){$0$}
\put(59, 100){$-1$}
\put(37, 100){$0$}

\put(60, 88){$W_{22}\cup k_1\cup k_2$}
\put(125, 70){$-1$}
\put(38, 68){$0$}
\put(260, 68){$-1$}
\put(180, 40){$\cong$}

\put(124, -2){$0$}
\put(30, -6){$-1$}
\put(260, -10){$-1$}
\put(60, -12){$W_{11}\cup k_1\cup k_2$}
\end{overpic}
\vspace{3mm}
\caption{The handle decompositions of $W_{22}$, $W_{11}$, $W_{22}\cup k_1\cup k_2$ and $W_{11}\cup k_1\cup k_2$.}  
\label{fig: D11D22}
\end{figure}

\begin{proof}
According to \cite[Section 1.4]{Ab}, the handle decompositions of $W_{22}$ and $W_{11}$ are shown as in Figure~\ref{fig: D11D22}.
By symmetry of the diagrams, $W_{11}$ is diffeomorphic to $W_{22}$. Similarly, we know that $W_{12}$ is diffeomorphic to $W_{21}$.

We prove that $D_{11}\not\simeq D_{22}$.
The way to prove it is essentially the same as that in the proof of Proposition~\ref{D1D2notisotopic}. We attach two $(-1)$-framed 2-handles $k_1$ and $k_2$ on $W_{22}$.
See the middle first diagram in Figure~\ref{fig: D11D22}.
The two attaching circles can be moved to the position shown in the middle second diagram by a handle sliding.
If $D_{11}\simeq D_{22}$, then by the same argument as in the proof of Proposition~\ref{D1D2notisotopic}, there is a diffeomorphism between the exteriors $W_{11}$ and $W_{22}$ which fixes the boundary.
Replacing the handle decomposition of $W_{22}$ in the middle first diagram by that of $W_{11}$, we get the bottom first diagram. By handle cancellation, we obtain the last diagram which can be deformed into a Stein handlebody in a similar way to that used in the proof of Proposition~\ref{D1D2notisotopic}.
Hence, $W_{11}\cup k_1\cup k_2$ admits a Stein structure.
On the other hand, by Theorem~\ref{AMtheorem} again,  $W_{22}\cup k_1\cup k_2$ never admit any Stein structure. This is because we can find two embedded square $-1$ spheres using the middle second diagram in Figure~\ref{fig: D11D22} and the argument in the proof of Proposition~\ref{D1D2notisotopic}.
This means $D_{11}\not\simeq D_{22}$.

We can also prove $D_{12}\not\simeq D_{21}$ in the similar way.
\end{proof}

In fact, the disks $D_{i1}\not\simeq D_{i2}$ and $D_{1j}\not\simeq D_{2j}$ for $i,j\in \{1,2\}$.  We will prove that the fundamental groups of the exteriors of these pairs are not isomorphic in the next section.


\begin{proof}[Proof of Theorem \ref{thm1}]
It follows from Proposition \ref{D1D2notisotopic} and Proposition \ref{Dijnotisotopic}.
\end{proof}

\section{Non-homeomorphic Lagrangian disk exteriors}

The handle decompositions of $W_{22}$ and $W_{12}$ are depicted  in Figure~\ref{fig: W12}. We prove that  $W_{22}$ and $W_{12}$ have distinct topological types by showing that they have non-isomorphic fundamental groups.

\begin{figure}[htb]
\begin{overpic}
[width=\textwidth]
{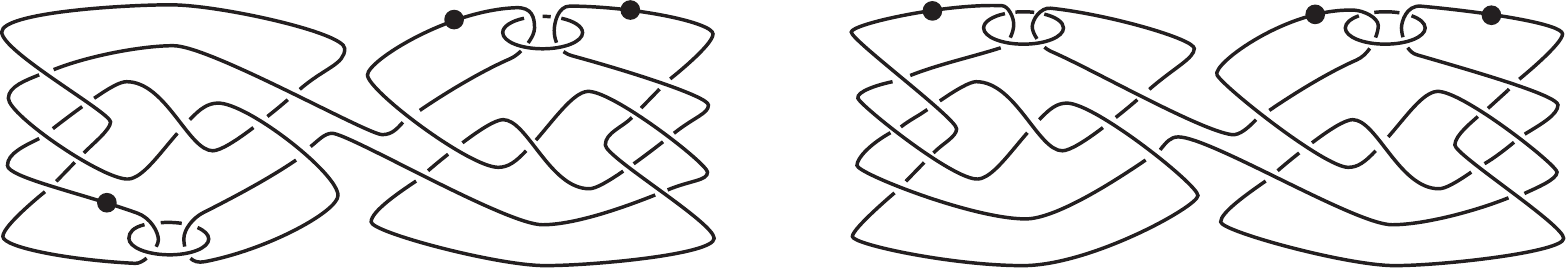}
\put(80, -10){$W_{22}$}
\put(270, -10){$W_{12}$}
\put(37, -7){$0$}
\put(122, 41){$0$}
\put(232, 42){$0$}
\put(316.5, 42){$0$}
\end{overpic}
\vspace{3mm}
\caption{The handle decompositions of $W_{22}$ and $W_{12}$.}
\label{fig: W12}
\end{figure}

\begin{figure}[htpb]
\begin{overpic}
[width=.8\textwidth]
{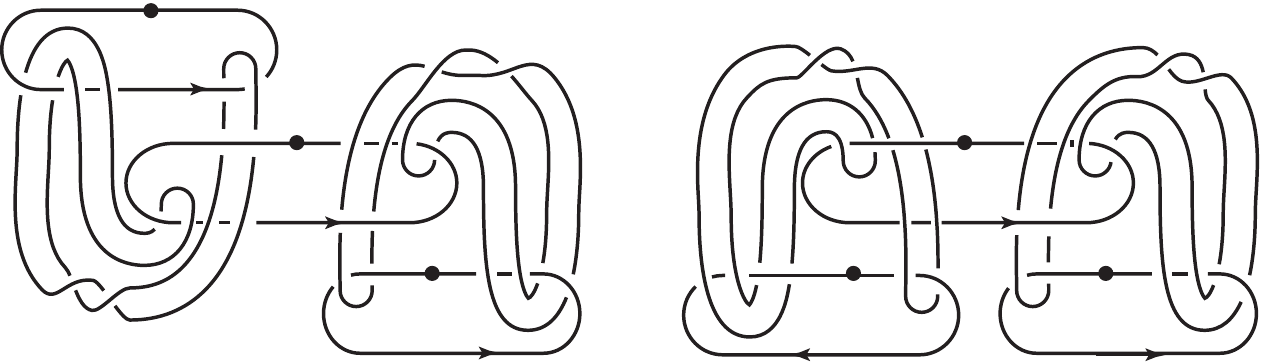}
\put(30, 85){$x_3$}
\put(70, 55){$x_2$}
\put(120, -7){$x_1$}
\put(200, -7){$x_3$}
\put(215    , 55){$x_2$}
\put(270, -7){$x_1$}
\put(135, 50){$r_1$}
\put(-7, 50){$r_2$}
\put(290, 50){$r_1$}
\put(150, 50){$r_3$}
\end{overpic}
\caption{The handle decompositions of $W_{22}$ and $W_{12}$ and the generators and relators of the fundamental groups.}
\label{fig: fund1}
\end{figure}

\begin{lemma}\label{Lem: presentations}
The fundamental groups of $W_{22}$ and $W_{12}$ are computed as follows:
$$\pi_1(W_{22})=\langle x_1,x_2,x_3|x_1x_2x_1^{-1}x_2^{-1}x_1x_2,x_3x_2x_3^{-1}x_2^{-1}x_3x_2\rangle,$$
$$\pi_1(W_{12})=\langle x_1,x_2,x_3|x_1x_2x_1^{-1}x_2^{-1}x_1x_2,x_3x_2^{-1}x_3^{-1}x_2x_3x_2^{-1}\rangle.$$
\end{lemma}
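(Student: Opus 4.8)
The plan is to read both presentations directly off the handle decompositions in Figure~\ref{fig: W12}, using the standard fact that a $4$-manifold built from $B^4$ by attaching $1$-handles and $2$-handles has fundamental group generated by one element for each $1$-handle, subject to one relator for each $2$-handle (see \cite[Section 1.4]{Ab} and \cite{GS}). First I would fix the generators. As in Subsection~\ref{ribbonmove}, the exterior of each ribbon disk here has three $1$-handles, drawn as the three dotted circles coming from the $3$-component unlink into which $\overline{9_{46}}\sharp\overline{9_{46}}$ is carried by two ribbon moves, together with two $0$-framed $2$-handles. Taking the meridian of each dotted circle gives generators $x_1,x_2,x_3$ as labelled in Figure~\ref{fig: fund1}; since attaching the three $1$-handles to $B^4$ produces a space homotopy equivalent to a wedge of three circles, the fundamental group of the $1$-skeleton is the free group on $x_1,x_2,x_3$.

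Next I would compute the two relators contributed by the $0$-framed $2$-handles. The relator of a $2$-handle is the class of its attaching circle in this free group, obtained by traversing the circle once and recording a letter $x_i^{\pm1}$ each time it passes through the disk bounded by the $i$-th dotted circle, the sign being determined by the direction of passage. Because $\overline{9_{46}}\sharp\overline{9_{46}}$ is a connected sum with $x_2$ the generator shared by the two bands, each attaching circle meets only the disks of two of the three dotted circles and spells a word of length six. The band common to the two diagrams yields $r_1=x_1x_2x_1^{-1}x_2^{-1}x_1x_2$ in both $W_{22}$ and $W_{12}$, while the remaining band yields $r_2=x_3x_2x_3^{-1}x_2^{-1}x_3x_2$ for $W_{22}$ and $r_3=x_3x_2^{-1}x_3^{-1}x_2x_3x_2^{-1}$ for $W_{12}$. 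Reading these two words off Figure~\ref{fig: fund1} and adjoining them to the free group gives exactly the claimed presentations.

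The conceptual reason the two exteriors have different second relators is that $W_{22}$ and $W_{12}$ arise from boundary connected sums that differ in one disk summand, which in the handle picture is recorded by whether the corresponding band is straight or twisted, as in Figure~\ref{fig: band}. This twist is precisely what inverts the exponents of the shared generator $x_2$ in the second band, turning $r_2$ into $r_3$; the first band is unaffected, so $r_1$ is identical in the two presentations.

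The step I expect to be the main obstacle is the careful bookkeeping in the diagram reading: tracking the cyclic order in which each attaching circle pierces the dotted disks and assigning the correct sign to each piercing, so that the words come out exactly as stated rather than as a cyclic rotation or an inverse. I would fix once and for all an orientation for each meridian and a direction of travel along each attaching circle, and keep these conventions consistent across both diagrams, so that the shared relator $r_1$ genuinely agrees and the only real discrepancy is the sign pattern distinguishing $r_2$ from $r_3$.
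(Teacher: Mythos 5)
Your proposal matches the paper's proof: the authors likewise deform the handle decompositions of Figure~\ref{fig: W12} into Figure~\ref{fig: fund1} and read the generators off the three dotted circles and the relators $r_1$, $r_2$, $r_3$ off the attaching circles of the $0$-framed $2$-handles, exactly as you describe. One small correction to your explanatory aside: the discrepancy between $r_2$ and $r_3$ comes from using $D_1$ versus $D_2$ in one summand (a different choice of pinch position, hence a different ribbon move), not from a straight versus twisted band in the boundary connected sum --- though this does not affect the computation, which you carry out directly from the diagrams.
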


\begin{proof}
Deforming the handle decomposition in Figure~\ref{fig: W12}, we get the diagrams of Figure~\ref{fig: fund1}.
Then we have $$r_1=x_{1}x_{2}x_{1}^{-1}x_{2}^{-1}x_{1}x_{2},$$
$$r_2=x_{3}x_{2}x_{3}^{-1}x_{2}^{-1}x_{3}x_{2},$$
$$r_3=x_{3}x_{2}^{-1}x_{3}^{-1}x_{2}x_{3}x_{2}^{-1}.$$
The presentations of $\pi_1(W_{22})$ and $\pi_1(W_{12})$ are $\langle x_1,x_2,x_3|r_1,r_2\rangle$ and $\langle x_1,x_2,x_3|r_1,r_3\rangle$, respectively.
\end{proof}

\begin{lemma}\label{Lem: nonisomorphic}
$\pi_1(W_{22})$ and $\pi_1(W_{12})$ are not isomorphic.
\end{lemma}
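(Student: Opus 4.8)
The plan is to distinguish the two finitely presented groups
$$
\pi_1(W_{22})=\langle x_1,x_2,x_3\mid x_1x_2x_1^{-1}x_2^{-1}x_1x_2,\; x_3x_2x_3^{-1}x_2^{-1}x_3x_2\rangle,
\qquad
\pi_1(W_{12})=\langle x_1,x_2,x_3\mid x_1x_2x_1^{-1}x_2^{-1}x_1x_2,\; x_3x_2^{-1}x_3^{-1}x_2x_3x_2^{-1}\rangle,
$$
by exhibiting a group-theoretic invariant that takes different values on them. Since both groups are nonabelian and have the same abelianization, the abelianization will not suffice, so I would look for a finite quotient that one group admits and the other does not. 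The natural first move is to count homomorphisms to a small finite group $G$ (for instance a symmetric group $S_n$ with $n$ small, or $\mathrm{SL}_2(\mathbb F_p)$ for a small prime $p$), since the number of such homomorphisms, or the number of surjections, is an isomorphism invariant that is completely mechanical to compute from a presentation.

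First I would analyze the structure of the two relators. Notice that both presentations share the relator $r_1=x_1x_2x_1^{-1}x_2^{-1}x_1x_2$, and differ only in the third generator's relation: in $\pi_1(W_{22})$ it is $r_2=x_3x_2x_3^{-1}x_2^{-1}x_3x_2$, while in $\pi_1(W_{12})$ it is $r_3=x_3x_2^{-1}x_3^{-1}x_2x_3x_2^{-1}$. Thus each group is an amalgam-like construction glued along $x_2$: the pair $(x_1,x_2)$ satisfies the same relation in both, while the pair $(x_3,x_2)$ satisfies either the $r_2$-type or the $r_3$-type relation. I would therefore study the two-generator groups $\langle a,b\mid aba^{-1}b^{-1}ab\rangle$ and $\langle a,b\mid ab^{-1}a^{-1}bab^{-1}\rangle$ and determine how their images differ under maps to a fixed finite target, keeping track of where the common generator $x_2$ goes so that the counts for the full three-generator groups factor appropriately.

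The concrete computation I would carry out is to fix a small finite group $G$ and enumerate the representation variety $\mathrm{Hom}(\pi_1(W_{ij}),G)$ by brute force: for each candidate assignment of $x_1,x_2,x_3$ to elements of $G$, check whether the relators map to the identity. Because the two presentations share $r_1$, the count splits as a sum over the value of $x_2$ of (number of valid $x_1$) times (number of valid $x_3$), and the only difference between the two groups lies in the $x_3$-factor; so it suffices to show that for some $G$ and some conjugacy class of $x_2$-values the relators $r_2$ and $r_3$ admit different numbers of solutions for $x_3$. I expect $\mathrm{SL}_2(\mathbb F_p)$ or a dihedral/symmetric group of modest order to separate them.

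The main obstacle I anticipate is guessing the right finite target group $G$: the two relators are short but nonstandard, and there is no guarantee that a very small $G$ will detect the difference, so some search over candidate groups may be required. A secondary subtlety is that $\mathrm{Hom}$-counts can coincide even for nonisomorphic groups, so if the raw homomorphism count fails to distinguish them I would refine the invariant—for example by counting homomorphisms with a prescribed image order, counting the number of index-$n$ subgroups, or analyzing the derived series or a specific characteristic quotient. Once a separating quotient is found, verifying that the two relator-counts differ is a finite, routine check, and that discrepancy immediately yields $\pi_1(W_{22})\not\cong\pi_1(W_{12})$, completing the lemma.
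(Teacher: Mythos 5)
Your proposal identifies a legitimate strategy in principle (counting homomorphisms to a finite group is an isomorphism invariant, and the shared relator $r_1$ does let the count factor through the image of $x_2$), but as written it is a plan rather than a proof: the decisive step --- actually exhibiting a finite group $G$ for which the relators $r_2$ and $r_3$ admit different numbers of solutions --- is never carried out. You acknowledge this yourself when you say the main obstacle is ``guessing the right finite target group'' and that ``some search over candidate groups may be required.'' Until that search produces a concrete separating quotient and the finite check is done, the lemma is not established; nothing in the proposal guarantees the search succeeds, and $\mathrm{Hom}$-counts to small groups can easily coincide for nonisomorphic groups, as you also note. So there is a genuine gap at exactly the point where the argument has to land.

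The paper closes this gap by a different, fully mechanical computation: it applies Fox calculus to the two presentations, abelianizes via the maps $x_1\mapsto t$, $x_2\mapsto t^{-1}$, $x_3\mapsto t^{\pm1}$, and reads off Alexander polynomials $(2-t^{-1})^2$ for $\pi_1(W_{22})$ versus $(2-t^{-1})(2-t)$ for $\pi_1(W_{12})$. This is the natural invariant here because both groups have abelianization $\mathbb{Z}$, so the Alexander module is canonically attached to the group up to the single ambiguity $t\mapsto t^{-1}$, and one checks the two polynomials remain distinct under that substitution (e.g.\ $(2t-1)^2$ and $(2-t)^2$ are not associates in $\mathbb{Z}[t^{\pm1}]$, whereas $(2-t^{-1})(2-t)$ is symmetric). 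If you want to salvage your finite-quotient approach, the Alexander module computation actually tells you where to look: the difference lives in the metabelian quotients, so a semidirect product $\mathbb{Z}/n\ltimes \mathbb{F}_p[\mathbb{Z}/n]$ for suitable $n,p$ would separate the groups --- but at that point you have already done the Fox calculus, and the polynomial comparison finishes the argument directly.
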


\begin{proof}


Let $t$ be a generator of $\mathbb{Z}$. We consider the abelianizations $\alpha:\pi_1(W_{22})\to \langle t\rangle\cong {\mathbb Z}$ and $\beta:\pi_1(W_{12})\to \langle t\rangle\cong {\mathbb Z}$ as follows,
\[\alpha:\ \left\{
             \begin{array}{lr}
             x_1,x_3\mapsto t, &  \\
             x_2\mapsto t^{-1}, &
             \end{array}
\right.\ \
\text{and}\ \
\beta:\ \left\{
             \begin{array}{lr}
             x_1\mapsto t, &  \\
             x_2,x_3\mapsto t^{-1}. &
             \end{array}
\right.
\]

The abelianizations of the Fox derivatives for $r_1,r_2,r_3$ are
$$\begin{cases}
\alpha(\partial_{x_1} r_1)=\beta(\partial_{x_1} r_1)=2-t^{-1},&  \alpha(\partial_{x_{2}} r_{1})=\beta(\partial_{x_{2}} r_{1})=2t-1,\\\alpha(\partial_{x_2} r_2)=2t-1,& \alpha(\partial_{x_3} r_2)=2-t^{-1}\\
\beta(\partial_{x_2} r_3)=-2+t,& \beta(\partial_{x_3} r_3)=2-t,
\end{cases}$$
and, obviously, $\partial_{x_{3}} r_{1}=\partial_{x_{1}} r_{2}=\partial_{x_{1}} r_{3}=0$.
So the presentation matrix of $\pi_1(W_{22})$ is the following
\[\alpha:\
  \left[ {\begin{array}{ccc}
   \partial_{x_1} r_1 & \partial_{x_2} r_1 & \partial_{x_3} r_1 \\
   \partial_{x_1} r_2 & \partial_{x_2} r_2 & \partial_{x_3} r_2 \\
  \end{array} } \right]\mapsto\left[ {\begin{array}{ccc}
   2-t^{-1} & 2t-1 & 0 \\
   0 & 2t-1 & 2-t^{-1} \\
  \end{array} } \right],
\]
and the Alexander polynomial of $\pi_1(W_{22})$ is $(2-t^{-1})^2$.

In the same way, the presentation matrix of $\pi_1(W_{12})$ is as follows:
\[\beta:\
\left[ {\begin{array}{ccc}
   \partial_{x_1} r_1 & \partial_{x_2} r_1 & \partial_{x_3} r_1 \\
   \partial_{x_1} r_3 & \partial_{x_2} r_3 & \partial_{x_3} r_3 \\
  \end{array} } \right]\mapsto\left[ {\begin{array}{ccc}
   2-t^{-1} & 2t-1 & 0 \\
   0 & -2+t & 2-t \\
  \end{array} } \right],
\]
and the Alexander polynomial of $\pi_1(W_{12})$ is $(2-t^{-1})(2-t)$.

Since $\pi_1(W_{22})$ and $\pi_1(W_{12})$ have different Alexander polynomials, they are not isomorphic.
\end{proof}

\begin{proof}[Proof of Theorem \ref{thm2}]
It follows from Lemma \ref{Lem: presentations} and Lemma \ref{Lem: nonisomorphic}.
\end{proof}

\section{Arbitrarily many non-isotopic Lagrangian disks}

In this section we construct arbitrarily many smoothly non-isotopic Lagrangian fillings for Legendrian knots. At first, we give a Stein handlebody decomposition of $W_2$, the exterior of the disk $D_2$ in $B^4$.

\begin{lemma}\label{Lem: Stein946}
There is a Stein handle decomposition of $W_2$ shown as in the bottom last diagram in Figure~\ref{fig: Stein946}.
\end{lemma}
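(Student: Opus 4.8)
The goal is to produce an explicit Stein handlebody diagram for $W_2$, the exterior in $B^4$ of the Lagrangian disk $D_2$ filling the Legendrian $\overline{9_{46}}$. My strategy is to start from a handle decomposition of $W_2$ of the sort already used in the proof of Proposition~\ref{D1D2notisotopic}, namely the top-left diagram of Figure~\ref{fig: proof1}, and then convert it into Legendrian standard form so that Theorem~\ref{char} certifies a Stein structure. Recall from the recipe in Subsection~\ref{ribbonmove} that this decomposition consists of a dotted circle (a $4$-dimensional $1$-handle, obtained by carving out $D_2$) together with a $0$-framed $2$-handle recording the single ribbon move. So I begin with one $1$-handle and one $2$-handle.

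\textbf{Key steps, in order.}
First I would isotope the dotted-circle/2-handle picture into a position where the $1$-handle is drawn as the standard Legendrian unknot pair (two parallel horizontal strands with dots), following the conventions of Gompf in \cite{G}. Second, I would Legendrian-realize the attaching circle of the $2$-handle: that is, isotope it so that it sits in Legendrian standard form, passing over the $1$-handle correctly, while keeping careful track of how the framing coefficient changes under the isotopies and handle slides. The crucial bookkeeping here is the framing: Theorem~\ref{char} requires that each $2$-handle be attached with framing exactly $tb(K)-1$. Since the $2$-handle in $W_2$ begins with framing $0$, I must arrange, via isotopy of the attaching circle through the $1$-handle (each passage over a $1$-handle alters the writhe/$tb$ relationship), that the resulting Legendrian knot $K$ has $tb(K)=1$, so that the required Stein framing $tb(K)-1=0$ matches the given $0$-framing. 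Third, I would redraw the diagram cleanly as the bottom-last diagram of Figure~\ref{fig: Stein946}, with the $-1$ coefficient denoting a Legendrian (contact $(-1)$-) surgery as stipulated in the caption of Figure~\ref{fig: proof1}.

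\textbf{Main obstacle.}
The genuinely delicate part is the simultaneous control of the Legendrian isotopy and the framing. It is easy to isotope the attaching circle into \emph{some} Legendrian position, but one must ensure that the self-linking data of the resulting Legendrian knot is compatible with the original smooth $0$-framing, since an error of $\pm 1$ in $tb$ would produce a $4$-manifold with the wrong second homology pairing and the diagram would no longer represent $W_2$. Concretely, I expect the hard step to be verifying that after threading the $2$-handle's attaching circle through the cusps and over the $1$-handle of the standard Legendrian unknot, the cusp/crossing count yields precisely $tb(K)=1$. This is a finite, diagrammatic check, but it is exactly where a careful hand is required; everything else is routine handle manipulation of the kind already carried out in Figures~\ref{fig: proof1} and~\ref{fig: blowdown}.
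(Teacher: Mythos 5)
Your proposal follows essentially the same route as the paper: isotope the ribbon-disk-exterior handle diagram of $W_2$ from Figure~\ref{fig: proof1}, convert the dotted-circle 1-handles to standard notation, and Legendrian-realize the $0$-framed 2-handle so that its attaching circle has $tb=1$ and hence Stein framing $tb-1=0$, which is exactly the check the paper records. (Two immaterial slips: $W_2$ is the bottom-left, not the top-left, diagram of Figure~\ref{fig: proof1}, and its decomposition has two 1-handles --- one for each component of the unlink produced by the pinch move --- rather than one.)
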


\begin{proof}
We isotope the handle decomposition of $W_2$ shown as in Figure~\ref{fig: proof1} to the top two diagrams in Figure \ref{fig: Stein946}. Then we change the dotted circle presentation of the 1-handles to the ordinary presentation, and transform the smooth handle decomposition in the bottom second diagram to a Stein handlebody in the last diagram in Figure~\ref{fig: Stein946}. Note that the Legendrian knot in the bottom last diagram has Thurston-Bennequin invariant $1$.
\end{proof}

\begin{figure}[htpb]
\begin{overpic}
[width=.8\textwidth]
{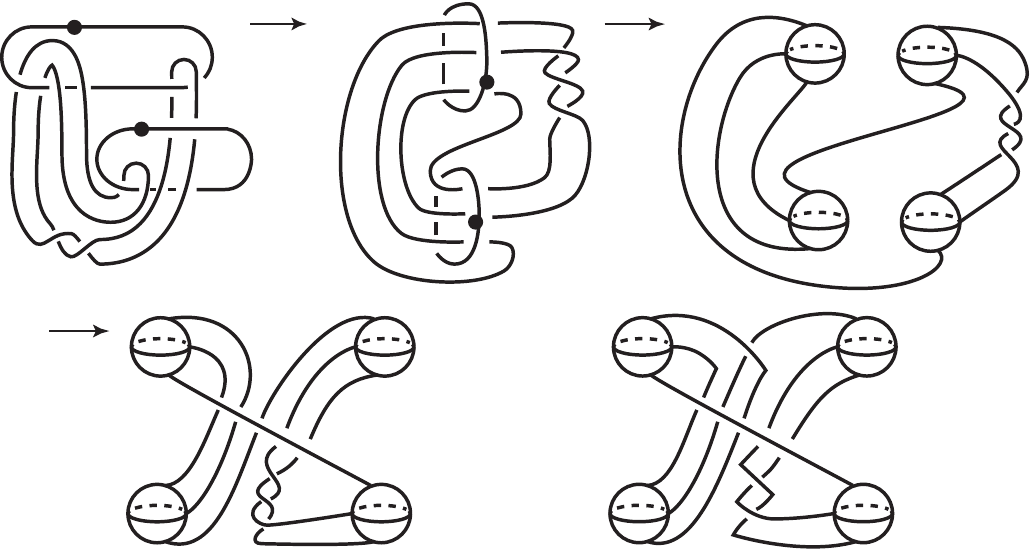}

\put(0, 85){$0$}
\put(95, 140){$0$}
\put(240, 120){$0$}

\put(70, 60){$0$}
\put(202, 63){$-1$}

\end{overpic}
\caption{A Stein structure on $W_2$, the exterior of the Lagrangian disk $D_2$ in $B^4$.  All these arrows are diffeomorphisms.}
\label{fig: Stein946}
\end{figure}

\begin{figure}
\begin{overpic}
[width=.9\textwidth]
{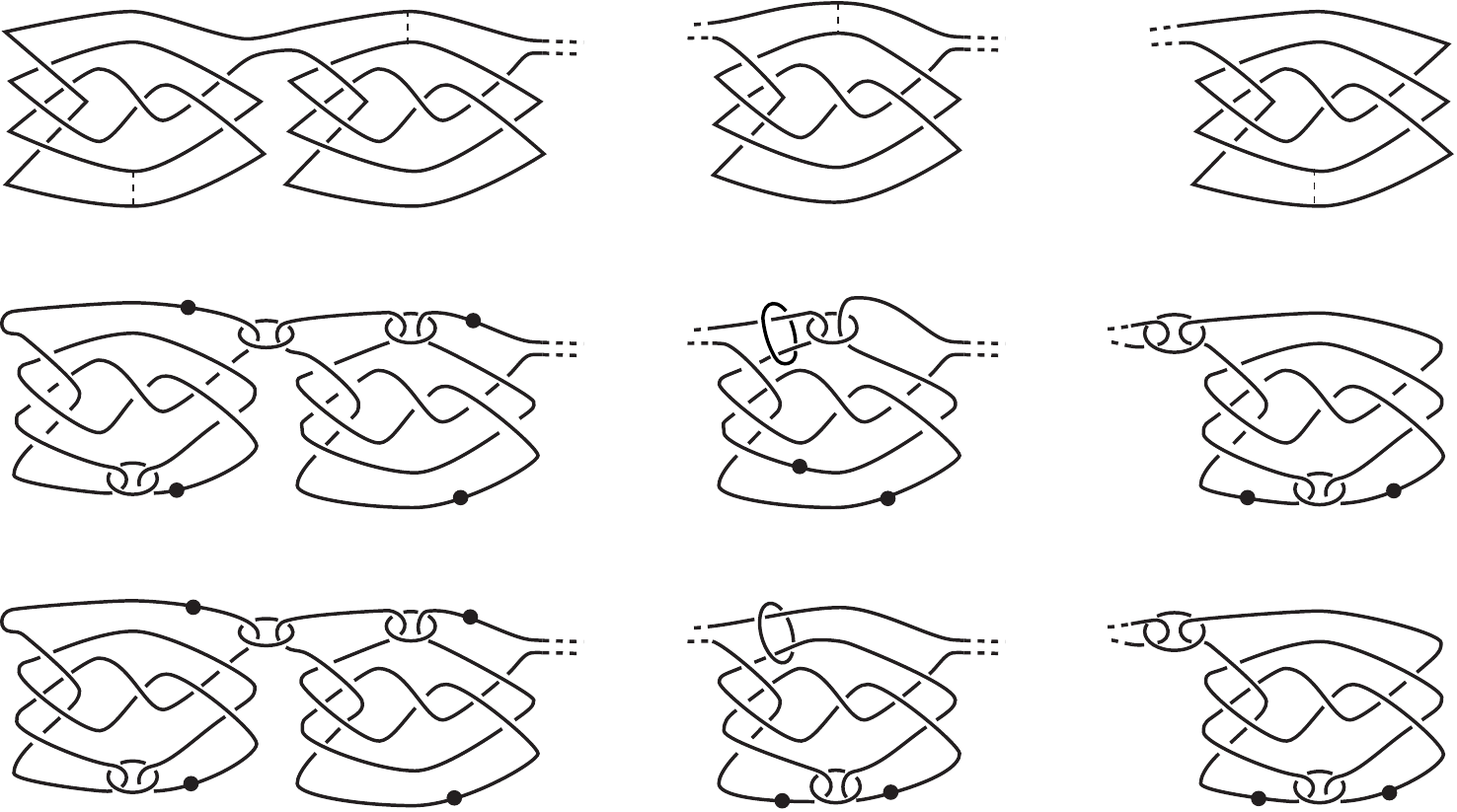}

\put(58, 115){$0$}
\put(28, -5){$0$}
\put(28, 60){$0$}
\put(90, 115){$0$}
\put(163, 116){$-1$}
\put(260, 115){$0$}
\put(58, 47){$0$}
\put(90, 47){$0$}
\put(183, -7){$0$}
\put(163, 50){$-1$}
\put(260, 47){$0$}
\put(290, 60){$0$}
\put(290, -7){$0$}
\put(182, 115){$0$}
    \end{overpic}
    \caption{The top diagram is $D_{i_1i_2\cdots i_n}$. The middle and the bottom diagrams are handle decompositions of $W\cup h$ and $W'\cup h$, respectively.}
    \label{fig: naturalprod}
\end{figure}

\begin{figure}[htb]
\begin{overpic}
[width=.55\textwidth]
{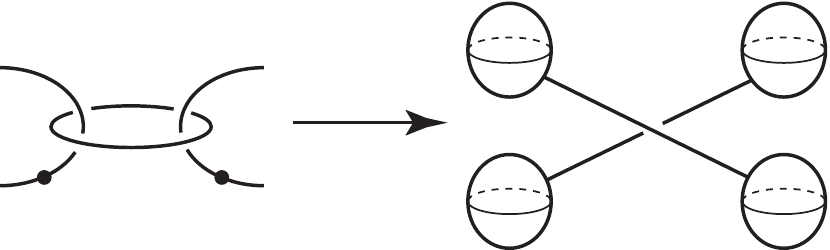}
\put(25, 40){$0$}
\put(140, 40){$-1$}
\end{overpic}
\caption{A 0-framed 2-handle connecting 1-handles and its Stein handlebody.}
\label{fig: 2handle}
\end{figure}

\begin{figure}[htpb]
\begin{overpic}
[width=\textwidth]
{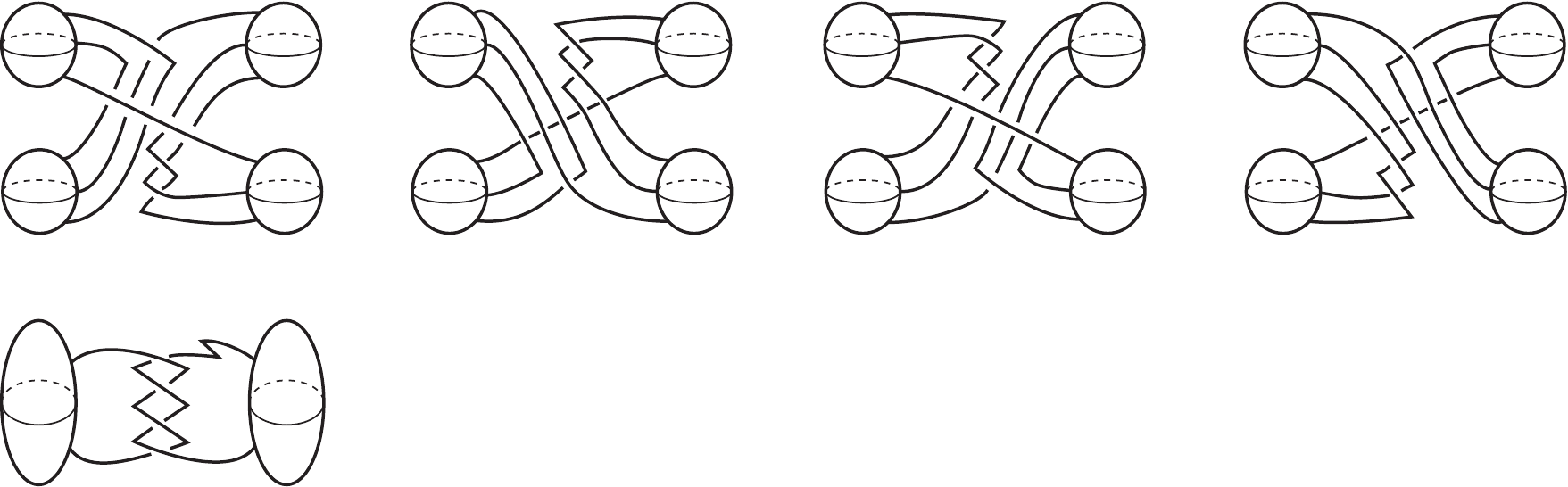}

\put(20, 110){$-1$}
\put(120, 110){$-1$}
\put(215, 110){$-1$}
\put(310, 110){$-1$}
\put(20, 35){$-1$}
\end{overpic}
\caption{The diagrams of the first row are rotations of Stein handlebody diagram in Figure~\ref{fig: Stein946}. The diagram in the second row is a handle decomposition of $W_2$ or $W_1$ union with a 2-handle $h$. }
\label{fig: L}
\end{figure}

\begin{figure}[htpb]
\begin{overpic}
[width=.75\textwidth]
{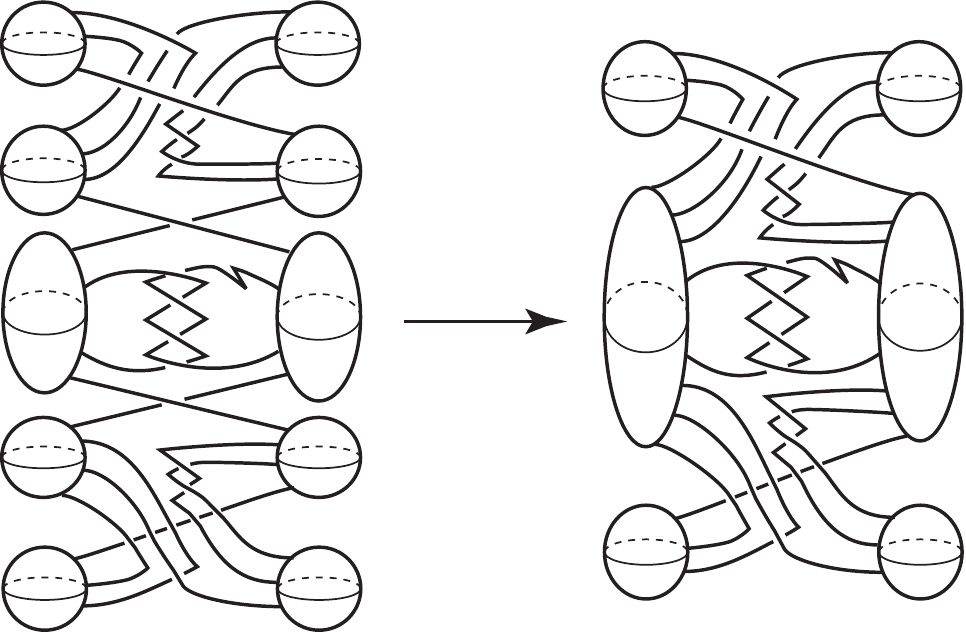}
\put(25, 175){$-1$}
\put(25, 90){$-1$}
\put(20, 112){$-1$}
\put(18, 61.5){$-1$}
\put(32, 2){$-1$}

\put(113,100){cancellation}
\put(200, 10){$-1$}
\put(195,90){$-1$}
\put(200,165){$-1$}

\end{overpic}
\caption{A Stein structure on the complement of the Lagrangian disk $D_{211}$ of $\mathcal{L}_3$ union with a 2-handle $h$.}
\label{fig: Connect}
\end{figure}

Now we give a proof of Theorem \ref{Nfilling}.
\begin{proof}[Proof of Theorem \ref{Nfilling}]

Taking Legendrian connected sum of $n$ copies of the Legendrian $\overline{9_{46}}$ in Figure \ref{946}, we get a Legendrian knot as in the first row of Figure~\ref{fig: naturalprod}. We denote it by $\mathcal{L}_n$.

Choosing one of the two pinch positions per a connected sum summand,
we can get $2^{n}$ Lagrangian disks $D_{i_1i_2\cdots i_n}$, where $i_j\in \{1,2\}$.
In fact, $D_{i_1i_2\cdots i_n}$ is the boundary connected sum $D_{i_1}\natural D_{i_2}\natural\cdots\natural D_{i_n}$ of $D_{i_1}$, $D_{i_2}$, $\cdots$, and $D_{i_n}$.
The first row in Figure~\ref{fig: naturalprod}
represents one example of Lagrangian disks by indicating $n$ positions to pinch.
By Lemma \ref{Lem: Stein946}, the exterior of this disk has a Stein structure which can be drawn by piling vertically
$n$ Stein handlebody diagrams of the four patterns
in the first row of Figure \ref{fig: L} according to the $n$-tuple $(i_1, i_2, \cdots, i_n)$,
and put $n-1$ $0$-framed 2-handle to connect adjacent summands as in Figure~\ref{fig: 2handle}.

Suppose that
$D_{i_1i_2\cdots i_n}\simeq D_{j_1j_2\cdots j_n}$, where the two $n$-tuples $(i_1, i_2, \cdots, i_n)$ and $(j_1, j_2, \cdots, j_n)$ in $\{1,2\}^n$ are distinct. Then there exists $r\in\{1,\cdots, n\}$ such that $i_r\neq j_r$, say $i_r=1$ and $j_r=2$.
Let $W$ and $W'$ denote the exteriors of two disks $D_{i_1i_2\cdots i_n}$ and $D_{j_1j_2\cdots j_n}$ respectively.
On the $r$-th summand we put a $(-1)$-framed 2-handle $h$ (see the second row in Figure~\ref{fig: naturalprod}) in a similar way to that depicted in Figure~\ref{fig: proof1}.
By our assumption, $W\cup h$ and $W'\cup h$ should be diffeomorphic by the same reason as the one in the proof of Proposition~\ref{D1D2notisotopic}.

At first we claim that  $W'\cup h$ admits a Stein structure. We can perform a cancellation of $1/2$-canceling pair as in the proof of Proposition \ref{D1D2notisotopic}. The Stein handlebody diagram of $W'\cup h$ can be constructed by
piling $n$ choices of the five patterns in Figure~\ref{fig: L} vertically, taking the last Stein diagram exactly once, and connecting them by $n-1$ 0-framed 2-handles. See Figure \ref{fig: Connect} for an example of Stein handlebody decomposition of $W'\cup h$.

Then we claim that $W\cup h$ does not admit any Stein structure. We isotope the middle diagram of Figure~\ref{fig: naturalprod} to the top diagram of Figure~\ref{fig: blowdown1}. Then, by handle sliding and blowdown, we can see that $W\cup h$ contains an embedded square $-1$ sphere. By  Theorem~\ref{AMtheorem}, $W\cup h$ does not admit any Stein structure.

\begin{figure}
\begin{overpic}
[width=0.9\textwidth]
{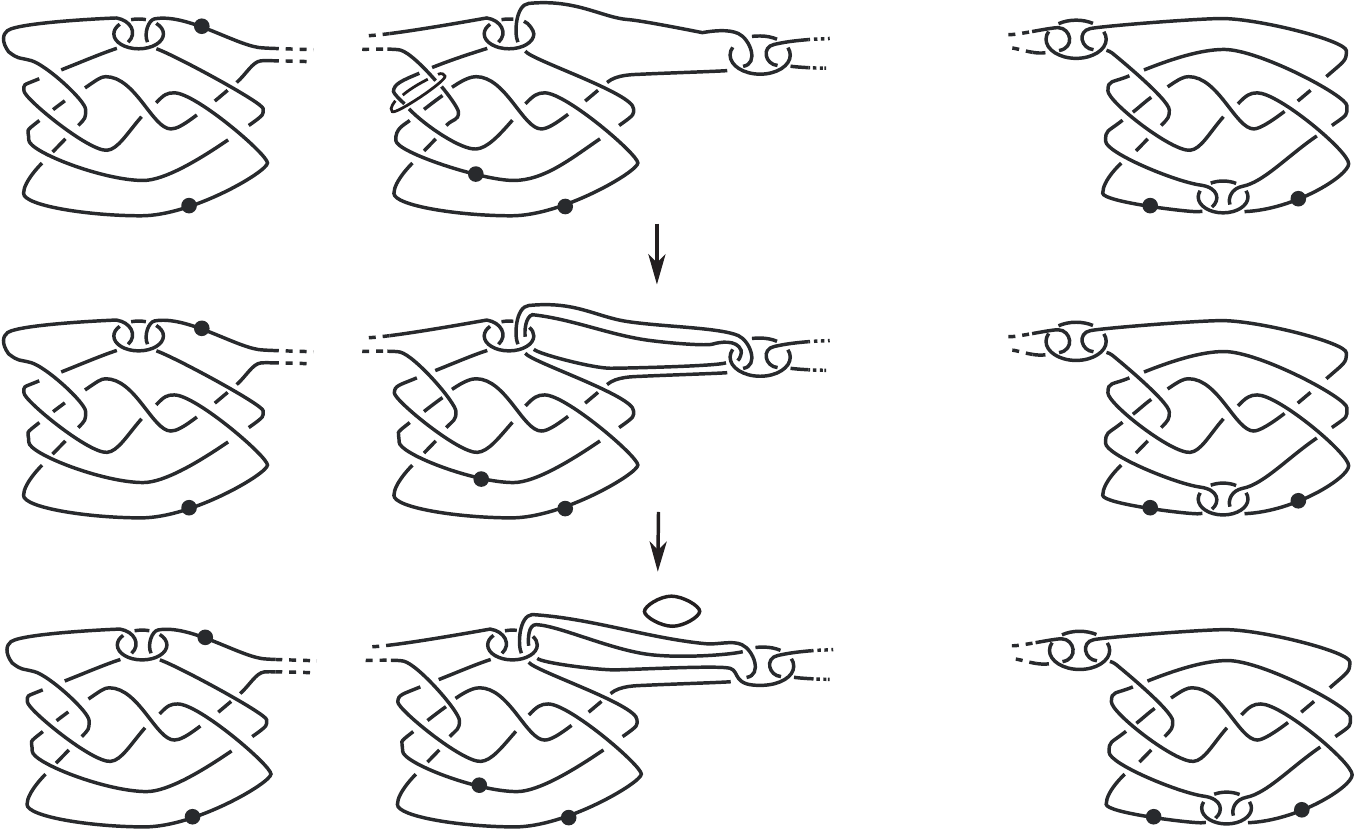}

\put(30, 197){$0$}
\put(119, 197){$0$}
\put(180, 194){$0$}
\put(255, 197){$0$}
\put(80, 170){$-1$}
\put(290, 139){$0$}

\put(30, 126){$0$}
\put(119, 126){$0$}
\put(180, 123){$0$}
\put(255, 126){$0$}
\put(130, 113){$-1$}
\put(290, 67){$0$}

\put(30, 50){$0$}
\put(119, 50){$1$}
\put(180, 47){$1$}
\put(255, 50){$0$}
\put(160, 56){$-1$}
\put(290, -7){$0$}

\end{overpic}
\caption{The first arrow stands for handle sliding the $(-1)$-framed 2-handle over a 1-handle. The second arrow is a blowdown. }
\label{fig: blowdown1}
\end{figure}

Thus, $W\cup h$ and $W'\cup h$ are not diffeomorphic and this means that $D_{i_1i_2\cdots i_n}\not\simeq D_{j_1j_2\cdots j_n}$.

Therefore, for any number $N$, the Legendrian knot $\mathcal{L}_n$ where $n$ is an integer with $n>\log_{2}N$, has at least $N+1$ mutually smoothly non-isotopic Lagrangian ribbon disks in $B^4$.
\end{proof}

\section{Relationship with the Akbulut cork}
Let $C$ be a contractible 4-manifold depicted in Figure~\ref{fig: akbulut}.
There is an involution on $\partial C$ which is the $180^\circ$ rotation about the horizontal axis. The pair of $C$ and the involution is called an {\it Akbulut cork}. This involution can extend to a homeomorphism on $C$, but cannot extend to any diffeomorphism on $C$. There are two slice disks in $B^4$ constructed by the cork twist of $C$ in the following way.
\begin{figure}[htpb]
\begin{overpic}
[width=.2\textwidth]
{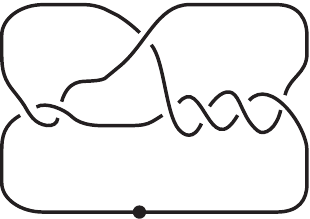}
\put(75, 50){$0$}
\end{overpic}
\caption{Akbulut cork $C$.}
\label{fig: akbulut}
\end{figure}




Akbulut constructed two smoothly non-isotopic slice disks $d_1$ and $d_2$ of a common knot $K_{Ak}$ (Figure 7 in \cite{A}) in $B^4$.  In fact, each exterior of $d_i$ ($i=1,2$) in $B^4$ is diffeomorphic to the exterior of a slice disk in $C$. The property of the cork implies that $d_1\not\simeq d_2$.

In the proof of \cite[Theorem 2]{AY}, Akbulut and Yildiz proved that there is an involution of the boundary of the exterior of ribbon disk $d_1$ (or $d_2$) in $B^{4}$ extends to a homeomorphism, but does not extend to any diffeomorphism of the exterior. Here we consider an involution on $\partial W_1$ and the extendability.

\begin{proposition}
\label{Cork}
Let $\iota$ be an involution on $\partial W_1=S^3_0(\overline{9_{46}})$ as in Figure~\ref{fig: involution}, i.e.,
the $180^\circ$ rotation about horizontal axis.
Then the action cannot extend to any self-homotopy equivalence on $W_1$.
\begin{figure}[htb]
\begin{overpic}
[width=.7\textwidth]
{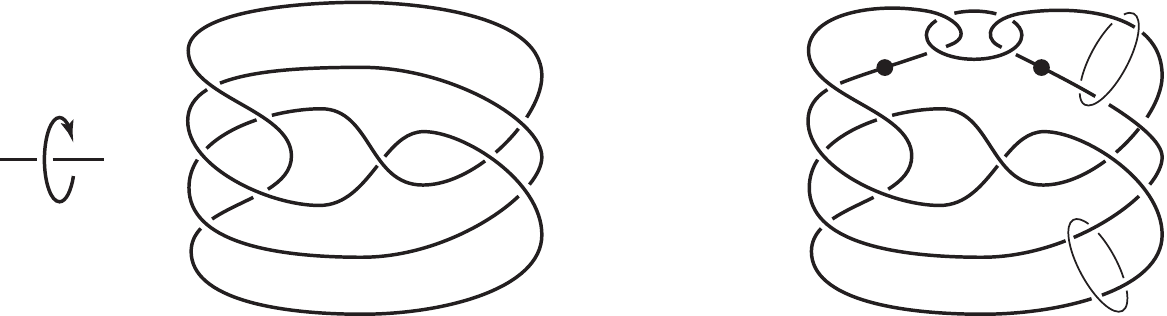}
\put(11,16){$\iota$}
\put(30,60){$0$}
\put(210,70){$0$}
\put(75,-10){$\partial W_1$}

\put(245,68){$c$}
\put(245,0){$\iota(c)$}
\put(208,-10){$W_1$}
\end{overpic}
\vspace{3mm}
\caption{An involution $\iota$ on $\partial W_1$.}
\label{fig: involution}
\end{figure}

\end{proposition}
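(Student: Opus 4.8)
The plan is to obstruct the extension homotopically, using the Alexander module of $W_1$ with respect to its abelianization. The crucial feature is that $W_1$ is the exterior of a slice (ribbon) disk rather than of a knot, so its Alexander polynomial is only a Fox--Milnor \emph{factor} of $\Delta_{\overline{9_{46}}}$ and need not be symmetric under $t\mapsto t^{-1}$. I will show that $\iota$ acts on $H_1$ of the boundary by reversing the meridian, so that any extension would have to carry this asymmetry to itself, which is impossible.

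First I would record the relevant algebra of $W_1$. From the handle decomposition in the top-left of Figure~\ref{fig: proof1} (two $1$-handles and one $2$-handle), the same Fox-calculus computation as in Lemmas~\ref{Lem: presentations} and~\ref{Lem: nonisomorphic} gives a one-relator presentation of $\pi_1(W_1)$ whose Alexander module is cyclic, with Alexander polynomial $\Delta_{W_1}(t)\doteq 2-t$, a single Fox--Milnor factor of $\Delta_{\overline{9_{46}}}(t)\doteq(2-t)(2-t^{-1})$. The key algebraic point is that $2-t$ is \emph{not} an associate of $2-t^{-1}=\Delta_{W_1}(t^{-1})$ in $\mathbb{Z}[t^{\pm1}]$. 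Next I would identify the induced map of $\iota$ on $H_1(\partial W_1)$. Since $\partial W_1=S^3_0(\overline{9_{46}})$ has $H_1\cong\mathbb{Z}$ generated by a meridian $\mu$, and the $180^\circ$ rotation reverses the orientation of $\mu$ (the geometric input read off from Figure~\ref{fig: involution}), we have $\iota_*(\mu)=-\mu$, i.e. $\iota$ induces $t\mapsto t^{-1}$.

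Now suppose, for contradiction, that $\iota$ extends to a self-homotopy equivalence $f\colon W_1\to W_1$. The inclusion induces an isomorphism $H_1(\partial W_1)\xrightarrow{\ \cong\ }H_1(W_1)\cong\mathbb{Z}\langle\mu\rangle$, and $f|_{\partial W_1}=\iota$, so from $f_*\circ i_*=i_*\circ\iota_*$ we conclude that $f_*$ acts on $H_1(W_1)$ by $\mu\mapsto-\mu$; that is, the induced automorphism of $\pi_1(W_1)$ intertwines the abelianization with $t\mapsto t^{-1}$. Because the Alexander polynomial is an invariant of the pair consisting of $\pi_1$ and its abelianization, applying $f_*$ forces $\Delta_{W_1}(t)\doteq\Delta_{W_1}(t^{-1})$. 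Equivalently, $f$ would induce a $(t\leftrightarrow t^{-1})$-semilinear automorphism of the Alexander module $\mathbb{Z}[t^{\pm1}]/(2-t)\cong\mathbb{Z}[1/2]$, and a direct check (the relation forces the image of a generator $c$ to satisfy $c=4c$, hence $c=0$) shows that no nonzero such map exists. Either way this contradicts $2-t\not\doteq 2-t^{-1}$, so no extension can exist.

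The step I expect to be the main obstacle is pinning down the action of $\iota$ on $H_1(\partial W_1)$, namely that the rotation genuinely reverses the meridian and therefore realizes $t\mapsto t^{-1}$ rather than the identity; this geometric input is exactly what converts the Fox--Milnor asymmetry of the disk exterior into an obstruction. I would also emphasize that, unlike the Akbulut--Yildiz argument which rules out diffeomorphic extensions, the present obstruction is purely homotopy-theoretic, using only the induced map on $\pi_1$ together with the abelianization, so it already prevents extension to a self-homotopy equivalence.
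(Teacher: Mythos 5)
Your argument is correct in outline but takes a genuinely different route from the paper. The paper's proof chooses a specific curve $c\subset\partial W_1$ that bounds in $W_1$, observes that attaching a $2$-handle along $c$ leaves $\pi_1\cong\langle x,y\mid y^{-1}xy=x^2\rangle$ (a non-abelian Baumslag--Solitar group) unchanged while attaching along $\iota(c)$ kills $\pi_1$ down to $\mathbb{Z}$ (the last diagram of Figure~\ref{fig: proof1}), and concludes that a homotopy extension of $\iota$ would force these two groups to be isomorphic. That argument exploits the non-abelian structure of $\pi_1(W_1)$ and needs no information about the action of $\iota$ on homology. You instead work entirely in the metabelian quotient, via the Alexander module and the Fox--Milnor asymmetry $\Delta_{W_1}(t)\doteq 2-t\not\doteq 2-t^{-1}$; your semilinear-automorphism computation on $\mathbb{Z}[t^{\pm1}]/(2-t)\cong\mathbb{Z}[1/2]$ is correct. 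What each buys: yours is softer algebra and would apply to any ribbon disk exterior with asymmetric Alexander polynomial, but it is hostage to the sign of $\iota_*$ on $H_1$; the paper's argument works even when $\iota_*$ is trivial on $H_1$.

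The one load-bearing step you assert without proof --- and correctly flag as the main obstacle --- is $\iota_*(\mu)=-\mu$. This happens to be true, but ``read off from the figure'' is not an argument, and if the sign were $+1$ your entire proof would evaporate (a homotopy equivalence acting by $+1$ on $H_1$ places no constraint on $\Delta_{W_1}$). Here is one way to close it without squinting at the diagram: the $180^{\circ}$ rotation extends to a diffeomorphism $r\colon W_1\to W_2$ with $r|_{\partial}=\iota$, so $\Delta_{W_2}(t)\doteq\Delta_{W_1}(t^{\epsilon})$ where $\iota_*\mu=\epsilon\mu$. The Fox-calculus computations behind Lemmas~\ref{Lem: presentations} and~\ref{Lem: nonisomorphic} show that the $D_1$- and $D_2$-type summands contribute the two \emph{distinct} factors $2-t$ and $2-t^{-1}$ of $\Delta_{\overline{9_{46}}}$ with respect to a single consistently oriented meridian of the (orientation-consistent, Legendrian) connected sum; hence $\Delta_{W_1}(t)\not\doteq\Delta_{W_2}(t)$, which forces $\epsilon=-1$. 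With that step supplied, your proof is complete and independent of the paper's.
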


\begin{proof}
Let $c$ be a curve on $\partial W_1$ as in Figure~\ref{fig: involution}.
Then the image $\iota(c)$ by the map $\iota$
is indicated in Figure~\ref{fig: involution}.

Attaching a 4-dimensional 2-handle $h$ on $W_1$ along $c$ (with a framing), we obtain a diffeomorphism $\partial W_1\times [0,1]\cup h \to \partial W_1\times [0,1]\cup_\iota h'$.
Here $h'$ is a 2-handle along $\iota(c)$.
If $\iota$ can homotopically extend inside $W_1$, then we can get a homotopy equivalence $W_1\cup h\to W_1\cup_\iota h'$.
In particular, $\pi_1(W_1\cup h)$ and $\pi_1( W_1\cup_\iota h')$ must be isomorphic.

The former gives an isomorphism $\pi_1(W_1\cup h)\cong \pi_1(W_1)$.
In fact, because $c$ gives a trivial relator in $\pi_1(W_1)$, attaching of $h$ does not change $\pi_1(W_1)$.
Due to the diffeomorphism in Figure~\ref{fig: 946ak}, $\pi_{1}(W_{1})$ is isomorphic to $\langle x,y|y^{-1}xy=x^2\rangle$.
This group is a solvable Baumslag-Solitar group \cite{BS} and it is well-known that it is non-abelian.
On the other hand, the latter $\pi_1(W_1\cup_\iota h')$ is isomorphic to ${\mathbb Z}$.
See the last picture in Figure~\ref{fig: proof1}.

Thus, $\pi_1(W_1\cup h)$ and $\pi_1(W_1\cup_\iota h')$ are not isomorphic.
This contradiction implies that $\iota$ cannot extend to a self-homotopy equivalence from $W_1$ to itself.
\end{proof}

\begin{remark}
Using a similar argument, there is an involution between the boundaries of exteriors of the pair of the disks $(D_{11},D_{22})$ (and $(D_{12},D_{21})$) which cannot extend to the inside as a self-homotopy equivalence.
\end{remark}

%

\begin{remark}
{\normalfont
Our non-isotopy property of $D_1$ and $D_2$ can also be reinterpreted by the Akbulut cork $C$ in Figure~\ref{fig: akbulut}. This is an idea by Akbulut \cite{A}. The diffeomorphisms which can be seen in Figure~\ref{fig: 946ak} give a diffeomorphism between the exterior of the ribbon disk $D_i$ ($i=1,2$) in $B^4$ and the exterior of a slice disk $D$ in $C$. The spanning disk of right dotted circle in the last picture of Figure~\ref{fig: 946ak} is $D$. This diffeomorphism relates the symmetries of the ribbon disks $D_1$ and $D_2$ of $\overline{9_{46}}$ and the cork twist of $C$. Let $\gamma$ and $\delta$ be the meridians of a dotted 1-handle and a 2-handle of the handle decomposition of $C$. See the last picture of Figure~\ref{fig: 946ak}. The two meridians are isotopic to the positions in the first picture in Figure~\ref{fig: 946ak} by the diffeomorphism. Furthermore, the meridians are moved to a symmetric position in $W_1$ and $W_2$ by an isotopy.  In Figure~\ref{fig: sym} we draw the positions of $\gamma$ and $\delta$ in $S^3\setminus \overline{9_{46}}$.
Thus, we can understand that the non-isotopiness of two disks is caused by the non-extendability property of the cork twist.
}
\begin{figure}[htpb]
\begin{overpic}
[width=.8\textwidth]
{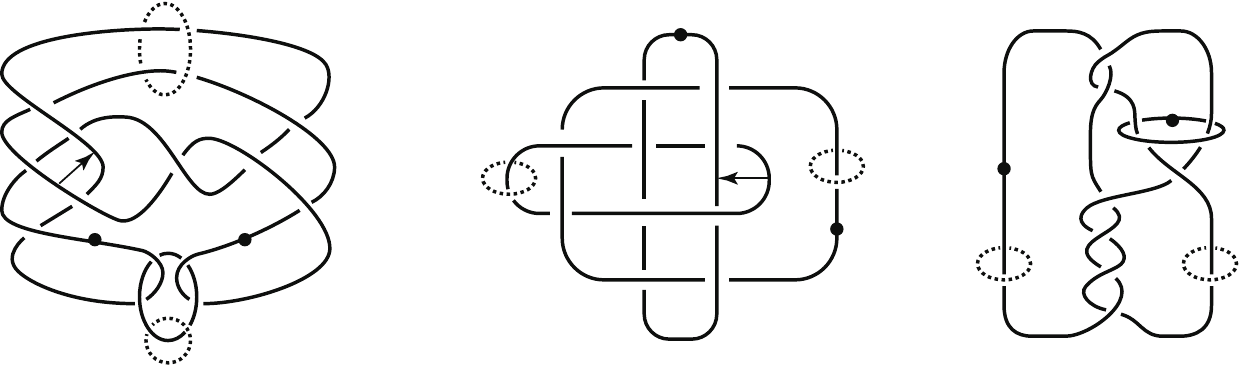}
\put(38, 30){$0$}
\put(44, 80){$\delta$}
\put(39, -5){$\gamma$}
\put(90, 40){$\cong$}
\put(108, 34){$\gamma$}
\put(123, 60){$0$}
\put(198, 50){$\delta$}
\put(210, 40){$\cong$}
\put(219, 20){$\delta$}
\put(283, 70){$0$}
\put(290, 20){$\gamma$}
\end{overpic}
\caption{The exterior of the ribbon disk $D_2$ of $\overline{9_{46}}$ in $B^4$ is diffeomorphic to the exterior of the slice disk in Akbulut cork.}
\label{fig: 946ak}
\end{figure}
\begin{figure}[htpb]
\begin{overpic}
[width=.35\textwidth]
{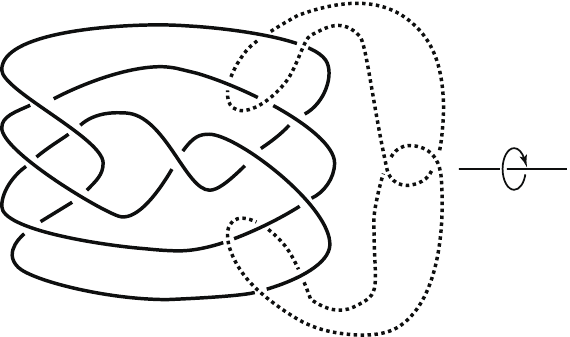}
\put(100, 60){$\delta$}
\put(100, 5){$\gamma$}
\end{overpic}
\caption{The symmetric positions of $\gamma$ and $\delta$ in $S^3\setminus \overline{9_{46}}$.}
\label{fig: sym}
\end{figure}

\end{remark}
\begin{remark}
{\normalfont
Both of Akbulut's slice disks $d_1,d_2$ are not isotopic to any Lagrangian disk even if taking the mirror image.
In fact, 
any ribbon disk in $B^4$ of both of $K_{Ak}$ and the mirror image $\overline{K_{Ak}}$ is not isotopic to a Lagrangian disk.
If $K_{Ak}$ has a Legendrian knot $L$ that fills a Lagrangian disk, then $tb(L)=-1$ due to \cite{Ch}.
By \cite{Ru},  $tb(L)$ has an upper bound $tb(L)\le \min\deg_a(F_{K_{Ak}})-1$.
Here $\min\deg_a(F_{K_{Ak}})$ is the minimal $a$-degree of the Kauffman polynomial $F_{K_{Ak}}(a,z)$.
By an easy calculation using the skein relation, $\min\deg_a(F_{K_{Ak}})=-1$. So  $tb(L)\le -2$, and there is no Lagrangian disk filling for $K$. Since $\min\deg_a(F_{\overline{K_{Ak}}})=-8$, $\overline{K_{Ak}}$ has no Lagrangian disk filling for the same reason.

}
\end{remark}

\end{document}